\title{On Phantom Maps and Finiteness Conditions}
\author{J. Schwass}
\address{Department of Mathematics\\ Grand Valley State University\newline{} Allendale, MI 49001}
\email{james.schwass@gvsu.edu}
\urladdr{}
\renewcommand{\tilde}{\widetilde}
\DeclareMathOperator{\aut}{\mathrm{aut}}
\DeclareMathOperator{\Aut}{\mathrm{Aut}}
\renewcommand{\phi}{\varphi}
 \newcommand{\A}        {\mathcal{A}}
  \newcommand{\B}{\mathcal{B}}
    \newcommand{\F}        {\mathcal{F}}
        \newcommand{\I}{\mathcal{I}}
        \newcommand{\N}        {\mathcal{N}}
\renewcommand{\P}{\mathcal{P}}
\renewcommand{\R}  {\mathcal{R}}
 \newcommand{\ZZ}    {\mathbb{Z}}
    \newcommand{\RR}    {\mathbb{R}}
    \newcommand{\CC}    {\mathbb{C}}
    \newcommand{\NN}    {\mathbb{N}}
 \newcommand{\cp}    {\CC\mathrm{P}}
    \newcommand{\om}    {\Omega}
    \newcommand{\s}     {\Sigma}
\newcommand{\Top}{\mathbf{Top}}
\newcommand{\lra}[1]{\stackrel{#1 }{\longrightarrow}}
\DeclareMathOperator{\holim}{holim}
            \DeclareMathOperator{\Cl}    {cl}
      \DeclareMathOperator{\pl}    {pl}
        \DeclareMathOperator{\colim} {colim}
        \DeclareMathOperator{\conn}{conn}
 \DeclareMathOperator{\Ph}    {Ph}
        \DeclareMathOperator{\map}    {map}
\newtheorem{thm}{Theorem}[section]    
\newtheorem{cor}{Corollary}[section]
\newtheorem{prop}{Proposition}[section]
\newtheorem*{zlem}{Zabrodsky Lemma}      
\newtheorem{ex}{Example}[section]
\theoremstyle{definition}
\newtheorem*{rem}{Remark}             
\begin{document}

\begin{abstract}    
A phantom map is a potentially nontrivial map which induces the zero map on every homology theory and on homotopy groups. 
Zabrodsky has shown that in the presence of particular finiteness conditions on spaces $X$ and $Y$ \emph{every} map $X\to Y$  is a phantom map. More specifically, Zabrodsky essentially requires $Y$ to be a finite CW complex and $X$ to be a Postnikov space. We show Zabrodsky's observations hold under less restrictive finiteness conditions on the spaces $X$ and $Y$, making use of the Zabrodsky lemma and the machinery of resolving classes. 

As an application we identify, up to extension, the group of self-homotopy equivalences of spaces belonging to a particular family. 
\end{abstract}

\maketitle


\section{Introduction}

The purpose of this work is to further our understanding of finiteness conditions through the lens of the theory of phantom maps, and to examine applications of this understanding to calculations in (classical) homotopy theory. Traditional finiteness conditions include having the homotopy type of a finite CW complex, or dually that of a Postnikov space. 
By a Postnikov space, we mean a space $X$ which is homotopy equivalent to its $N$th Postnikov approximation, $X^{(N)}$, for some $N$. In other words, the Postnikov tower 
\[
\dots \to X^{(n)}\to X^{(n-1)}\to \dots
\]
stabilizes, and could be truncated to a tower of finite-length without loss of data. 

A generalization of the notion of a Postnikov space is that of a polyGEM, in the sense of Dwyer and Farjoun \cite{DwyerFarjoun}; a space $X$ is a polyGEM if it is built up from generalized Eilenberg-MacLane spaces (GEMs) in a finite number of principle fibrations. Such a space has a finite length Generalized Postnikov Tower in the sense of Iriye and Kishimoto \cite{IriyeKishimoto}. These spaces were originally called oriented polyGEMs by Farjoun in \cite{farjoun1995cellular}. 
Roughly dual to the notion of a polyGEM is that of an $\F$-finite space: a space is $\F$-finite if it can be built from finite-type wedges of spheres in a finite number of principal cofibrations. These are the spaces of finite $\F$-cone length in the sense of Arkowitz, Stanley, and Strom \cite{ASS}, where $\F$ is the collection of finite-type wedges of spheres. We note for future reference that an $\F$-finite space is necessarily of finite-type as is a polyGEM. We wondered how differently $\F$-finite spaces can behave from finite spaces from the perspective of phantom map theory, and dually for polyGEMs and Postnikov spaces. 


A map $f:X\to Y$ is called a \textbf{phantom map} if for each map $H\to X$ with $H$ a finite CW complex the composite $H\to X \to Y$ is nullhomotopic. 
Phantom maps are interesting for several reasons; these maps are invisible to an algebraic topologists standard toolkit of homology theories and homotopy groups, but carry topologically nontrivial information, and play an important role in homotopy theory for this reason. Phantom maps have been used by Gray and others to construct counterintuitive examples, as in the theory of Same $N$-Type (SNT) sets \cite{GraySNT}.
Roitberg \cite{RoitbergAut} and others have used phantom map theory to make calculations in the study of the groups $\Aut(X)$ of homotopy classes of self-homotopy equivalences of a space $X$. 
We will present a similar application of our findings below.

A lucrative way to study phantom maps $X\to Y$ between finite-type nilpotent spaces $X$ and $Y$ centers on the profinite completion $Y\to  Y^\wedge$. It is not difficult to show that a map $X\to Y$ is a phantom map if and only if the composite $X\to Y \to Y^\wedge$ is nullhomotopic (see \cite{handbook} or \cite{zabrodsky}, for example). So, we can describe $\Ph(X,Y)$, the subset of $[X,Y]$ consisting of homotopy classes of phantom maps, as the weak kernel of the natural map 
\[
[X,Y]\to [X, Y^\wedge]. 
\]
When $\map_\ast(X, Y^\wedge)\sim \ast$, it follows readily that $[X,Y]=\Ph(X,Y)$. Zabrodsky \cite{zabrodsky} has shown that under the condition $\map_\ast(X, Y^\wedge)\sim \ast$ there is a natural bijection of pointed sets 
\begin{equation}
\Ph(X,Y)\cong \prod H^k(X,\pi_{k+1}(Y)\otimes \RR)\label{PhantomIdentCool}.
\end{equation}
In case $X$ is a cogroup or $Y$ is grouplike, this bijection is an isomorphism of groups. 
For these reasons, the condition $\map_\ast(X, Y^\wedge)\sim \ast$ is of significant interest in the theory of phantom maps.

The following theorem of Zabrodsky, which is a consequence of Miller's confirmation of the Sullivan conjecture, completely characterizes maps from a Postnikov space into a finite CW complex. 



\begin{thm}\label{thmZabrodsky}\cite{zabrodsky} 
If $K$ is a simply-connected Postnikov space of finite-type
and $Y$ is a finite CW complex, then for any $n,m\geq 0$
$\map_\ast(\s^n K, \om^m Y^\wedge)\sim \ast.$
In particular, every map $\s^n K\to \om^m Y$ is a phantom map. 
\end{thm}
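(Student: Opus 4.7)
The plan is to reduce via the loop--suspension adjunction, induct on the Postnikov length of $K$, and invoke Miller's theorem (the Sullivan conjecture) on the resulting Eilenberg--MacLane building blocks.

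First, the adjunction $\map_\ast(\s^n K, \om^m Y^\wedge) \simeq \map_\ast(K, \om^{n+m} Y^\wedge)$ reduces the statement to proving $\map_\ast(K, \om^q Y^\wedge) \sim \ast$ for all $q \geq 0$ and every simply-connected finite-type Postnikov space $K$. Second, because $K$ carries a finite-length Postnikov tower
\[
\ast = K^{(0)} \to K^{(1)} \to \dots \to K^{(N)} = K
\]
of principal fibrations with fibres $K(\pi_i, i)$ and each $\pi_i$ finitely generated abelian, applying $\map_\ast(-, \om^q Y^\wedge)$ yields a tower of fibrations of mapping spaces. Contractibility of $\map_\ast(K^{(i-1)}, \om^q Y^\wedge)$ and of $\map_\ast(K(\pi_i, i), \om^q Y^\wedge)$ forces contractibility of $\map_\ast(K^{(i)}, \om^q Y^\wedge)$, reducing the theorem to the case $K = K(\pi, n)$ with $\pi$ finitely generated abelian.

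Third, splitting $\pi = T \oplus F$ into torsion and free parts gives $K(\pi, n) \simeq K(T, n) \times K(F, n)$; by the exponential law, I would treat the factors separately. Miller's theorem yields $\map_\ast(K(T, n), Z) \sim \ast$ for any finite-dimensional CW complex $Z$, and this upgrades to $Z = \om^q Y^\wedge$ by realising profinite completion as a homotopy inverse limit of finite complexes and commuting mapping spaces with the limit. For the free part it suffices to handle $K(\ZZ, n)$, which I would insert into the fibre sequence $K(\ZZ, n) \to K(\mathbb{Q}, n) \to K(\mathbb{Q}/\ZZ, n)$ coming from $0 \to \ZZ \to \mathbb{Q} \to \mathbb{Q}/\ZZ \to 0$. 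The mapping space out of $K(\mathbb{Q}/\ZZ, n)$ is contractible by the torsion case, and the mapping space out of $K(\mathbb{Q}, n)$ is contractible because $K(\mathbb{Q}, n)$ is $H\mathbb{F}_p$-acyclic for every prime $p$ while $Y^\wedge$ is controlled by its $H\mathbb{F}_p$-local factors $Y^\wedge_p$; the long fibre sequence of mapping spaces then yields $\map_\ast(K(\ZZ, n), \om^q Y^\wedge) \sim \ast$.

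The technical heart is the free-part case: Miller's theorem applies directly only to torsion coefficient groups and to finite-dimensional targets, so bridging to the rationally nontrivial source $K(\ZZ, n)$ and the completed target $\om^q Y^\wedge$ demands careful accounting of the interaction between profinite completion, rationalization, and the homotopy limits in play. The Zabrodsky lemma, featured prominently in the rest of the paper, provides the natural organising principle for commuting mapping spaces past fibre sequences and limits, and I would expect the full proof to be cleanly phrased in that language.
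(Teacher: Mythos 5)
A point of comparison first: the paper does not prove this statement. It is quoted from Zabrodsky (as a consequence of Miller's theorem) and used as the base case of Proposition \ref{propStep1}, so your proposal can only be measured against the standard argument in the literature. Your architecture --- strip off $\s^n$ and $\om^m$ by adjunction, induct up the Postnikov tower, split a finitely generated $\pi$ into torsion and free parts, kill the torsion with Miller's theorem and the free part via the arithmetic fibration --- is exactly that standard argument, so the plan is sound in outline.

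Two steps need repair, one of them seriously. The smaller issue: you repeatedly treat $\map_\ast(-,Z)$ as though it carried fiber sequences in the source variable to fiber sequences of mapping spaces (``a tower of fibrations of mapping spaces,'' ``the long fibre sequence of mapping spaces then yields\dots''). It does not --- this is precisely the ``formally terrible'' direction the paper warns about --- and in your free-part step the fibration $K(\ZZ,n)\to K(\mathbb{Q},n)\to K(\mathbb{Q}/\ZZ,n)$ has the unknown space as its \emph{fiber}, so even the Zabrodsky lemma extracts nothing from it. The correct moves are to apply the Zabrodsky lemma to $K(\pi_i,i)\to K^{(i)}\to K^{(i-1)}$ (contractibility of the mapping space out of the fiber makes $\map_\ast(K^{(i-1)},Z)\to\map_\ast(K^{(i)},Z)$ an equivalence), and to $K(\mathbb{Q}/\ZZ,n-1)\to K(\ZZ,n)\to K(\mathbb{Q},n)$, legitimate since $n-1\geq 1$ by simple connectivity; this identifies $\map_\ast(K(\ZZ,n),Z)$ with $\map_\ast(K(\mathbb{Q},n),Z)$, which is then killed by $H\mathbb{F}_p$-acyclicity as you say. (The same remark applies to your use of the ``exponential law'' for $K(T,n)\times K(F,n)$: the based mapping space out of a product is not the product of mapping spaces; use the Zabrodsky lemma on $K(T,n)\to K(T,n)\times K(F,n)\to K(F,n)$ instead.) The genuine gap is the bridge from Miller's theorem to the completed target: the profinite completion of a finite complex is \emph{not} a homotopy inverse limit of finite complexes, and although $Y^\wedge$ is a homotopy limit of spaces with finite homotopy groups, $\map_\ast(K(\pi,n),F)$ for such $F$ is certainly not contractible termwise (take $F=K(\ZZ/p,n)$), so ``commuting the mapping space with the limit'' proves nothing. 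Closing this requires a further substantive input --- for instance the Miller/Lannes comparison $\map_\ast(B\ZZ/p,Y^\wedge_p)\simeq \map_\ast(B\ZZ/p,Y)^\wedge_p$ for finite-dimensional $Y$, or Zabrodsky's original reduction through the Bousfield--Kan tower. You correctly flag this as the technical heart, but as written the step fails.
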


Theorem \ref{thmZabrodsky} gives us an opportunity to demonstrate, for the skeptical, that essential (i.e. non-nullhomotopic) phantom maps exist. Since $\cp^\infty=K(\ZZ,2)$ we have 
\[
[\cp^\infty,S^3]=\Ph(\cp^\infty,S^3)\cong \prod H^k(\cp^\infty;\pi_{k+1}(S^3)\otimes\RR) \cong \RR,
\]
and so there are uncountably many distinct homotopy classes of phantom maps $\cp^\infty\to S^3$. This example is originally due to Gray \cite{GraySNT}, though the calculation there has a decidedly different flavor.

In this note we prove the following generalization of Theorem \ref{thmZabrodsky}. 


\begin{thm}\label{thmMain}
If $K$ is a simply-connected polyGEM and $Y$ is $\F$-finite,  then for any $n,m\geq 0$
$\map_\ast(\s^n K, \om^m Y^\wedge)\sim \ast.$
\end{thm}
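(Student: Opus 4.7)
The plan is to derive Theorem \ref{thmMain} from Theorem \ref{thmZabrodsky} via a double induction combined with the Zabrodsky lemma and the machinery of resolving classes. By the adjunction $\map_\ast(\s^n K, \om^m Y^\wedge) \simeq \map_\ast(\s^{n+m}K, Y^\wedge)$, the statement reduces to showing $\map_\ast(\s^n K, Y^\wedge)\sim\ast$ for every $n\geq 0$, every simply-connected polyGEM $K$, and every $\F$-finite $Y$. Both inductions will be phrased in these terms.

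The first induction treats the source: fix $Y$ and induct on the length of the generalized Postnikov tower of $K$. In the inductive step, $K$ is the total space of a principal fibration $F\to K\to K'$ with $F$ a GEM and $K'$ a polyGEM of strictly smaller tower length. The Zabrodsky lemma, applied to this fibration with target $\om^n Y^\wedge$, yields a weak equivalence $\map_\ast(K',\om^n Y^\wedge)\simeq \map_\ast(K,\om^n Y^\wedge)$ whenever $\map_\ast(F,\om^n Y^\wedge)\sim\ast$, and so propagates the desired vanishing from $K'$ to $K$. The base case ($K$ a GEM) is addressed by decomposing $\s^n G$ via the James splitting $\Sigma(A\times B)\simeq \Sigma A\vee \Sigma B\vee\Sigma(A\wedge B)$, with a resolving-class reduction to accommodate the possibly infinite product structure of a GEM; this further reduces matters to the case $K=K(\pi,k)$ of a single Eilenberg--MacLane space.

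The second induction treats the target: fix $K=K(\pi,k)$ and induct on the $\F$-cone length of $Y$. In the inductive step, a principal cofibration $W\to Y'\to Y$ (with $W\in\F$ and $Y'$ of smaller $\F$-cone length) descends through profinite completion to a cofibration $W^\wedge\to (Y')^\wedge\to Y^\wedge$, and the Ganea-type fiber sequence associated with this cofibration puts the problem in a form to which the Zabrodsky lemma again applies, reducing the vanishing for $Y^\wedge$ to the vanishing for $(Y')^\wedge$ and $W^\wedge$. The base case $Y=W\in\F$ splits into two subcases: for a finite wedge of spheres, it follows from Theorem \ref{thmZabrodsky} via the already-completed first induction; for a finite-type but infinite wedge, it requires the resolving-class framework.

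The hardest step is precisely this last subcase. Because $\map_\ast(K(\pi,k),-)$ does not commute with the colimits presenting an infinite finite-type wedge, and because profinite completion is not cleanly compatible with those colimits either, a naive limit argument produces an uncontrollable $\limone$ obstruction. The role of the resolving-class framework is to identify the class of targets for which the required vanishing holds as a resolving class containing the generators $\F$, from which one concludes that the class contains every $\F$-finite space and hence that Zabrodsky's observations transport cleanly from finite CW complexes to the $\F$-finite setting.
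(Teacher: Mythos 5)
Your overall architecture is partially right: the adjunction reduction to $n=m=0$, and the induction on Postnikov length using the Zabrodsky lemma for the source variable (rewriting the structural fibration $X\to Z\to K$ as $\om K\to X\to Z$ with $\om K$ a GEM), is exactly the paper's Proposition \ref{propStep1}. The GEM base case is also handled more cleanly in the paper than by your James-splitting gesture: a GEM is weakly equivalent to the weak product, i.e.\ the colimit of its finite subproducts, each of which is a Postnikov space covered by Theorem \ref{thmZabrodsky}, and $\map_\ast(\colim X_i,Y^\wedge)\simeq\lim\map_\ast(X_i,Y^\wedge)$. Note, however, that this base case is only run against \emph{finite} complexes $Y$; your plan to run the source induction against a fixed arbitrary $\F$-finite $Y$ creates a circularity with your second induction that would need to be untangled.

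The genuine gap is the target induction. First, profinite completion does not carry the cofibration $W\to Y'\to Y$ to a cofibration of completions (completion is compatible with fibrations of nilpotent spaces, not cofibrations; the cofiber of $W^\wedge\to(Y')^\wedge$ need not be complete). Second, the Zabrodsky lemma is a statement about the \emph{source} variable of $\map_\ast(-,T)$ and gives nothing here; what you actually need is to identify the homotopy fiber $F$ of $(Y')^\wedge\to Y^\wedge$ and show $\map_\ast(K,F)\sim\ast$, and even then you only control $\pi_i\map_\ast(K,Y^\wedge)$ for $i\geq 1$, not $\pi_0$. Third, $F$ is not $W$ (only through the Blakers--Massey range) and is not of smaller cone length, so a straight induction on $\Cl_\F$ does not close up. This is precisely why the paper does not induct on cone length: it verifies that $\R=\{Y\mid\map_\ast(K,Y^\wedge)\sim\ast\mbox{ for all simply-connected polyGEMs }K\}$ is a \emph{strong} resolving class (Proposition \ref{propThingsResolving}, which itself requires Dror Farjoun's theorem because completion does not commute with homotopy limits) and then invokes Strom's Theorem \ref{thmResolvingHammer}, which packages the iterated Ganea fiber--cofiber analysis you are gesturing at. Finally, your claim that the infinite-wedge base case follows from ``the resolving-class framework'' is unsupported: resolving classes are closed under homotopy \emph{limits}, and a wedge is a colimit. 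The paper instead shows $\s\A^\vee\subseteq\R$ by Gray's identification of the fiber of the collapse map $\s A\vee S^n\to S^n$ as $\s A\wedge(\bigvee_k S^{nk})$, producing a tower of principal fibrations $W_i$ with $\conn(W_i)\to\infty$ that is preserved by completion, whence $\map_\ast(K,W^\wedge)\simeq\map_\ast(K,\holim W_i^\wedge)\sim\ast$. These two steps carry most of the weight of the theorem and are absent from your proposal.
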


In light of \eqref{PhantomIdentCool} we have the following immediate corollary of Theorem \ref{thmMain}. 

\begin{cor}\label{corPhantomIdent}
If $K$ is a simply-connected polyGEM of finite type and $Y$ is an $\F$-finite space, then for any $n,m\geq 0$
\[
[\s^n K,\om^m Y]= \Ph(\s^n K,\om^m Y)\cong \prod_n H^n(\s^n K;\pi_{n+1}(\om^k Y)\otimes\RR). 
\]
\end{cor}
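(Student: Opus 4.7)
The plan is to derive this corollary directly by chaining Theorem \ref{thmMain} with Zabrodsky's identification \eqref{PhantomIdentCool}. First I would use Theorem \ref{thmMain}, applied to the simply-connected polyGEM $K$ and the $\F$-finite space $Y$, to conclude that $\map_\ast(\s^n K,\om^m Y^\wedge)\sim\ast$ for all $n,m\geq 0$. Passing to path components this yields in particular $[\s^n K,\om^m Y^\wedge]=\ast$.

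Next I would invoke the standard characterization of phantom maps between finite-type nilpotent spaces recalled in the introduction: $\Ph(X,Y)$ is the weak kernel of the natural map $[X,Y]\to[X,Y^\wedge]$ induced by profinite completion. Since $[\s^n K,\om^m Y^\wedge]$ is trivial, the weak kernel is all of $[\s^n K,\om^m Y]$, which gives the equality
\[
[\s^n K,\om^m Y]=\Ph(\s^n K,\om^m Y).
\]
To apply this characterization I need to check that $\s^n K$ and $\om^m Y$ are both finite-type nilpotent; this holds because $K$ is a finite-type simply-connected polyGEM (so $\s^n K$ is simply-connected of finite type) and because $Y$ being $\F$-finite makes $Y$ simply-connected of finite type, whence $\om^m Y$ is finite-type nilpotent.

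Finally I would apply \eqref{PhantomIdentCool}, whose hypothesis $\map_\ast(\s^n K,\om^m Y^\wedge)\sim\ast$ is exactly what Theorem \ref{thmMain} delivers, to identify
\[
\Ph(\s^n K,\om^m Y)\cong \prod_{k} H^{k}\!\bigl(\s^n K;\,\pi_{k+1}(\om^m Y)\otimes\RR\bigr).
\]
Combining this with the previous equality yields the stated formula.

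There is no serious obstacle: the corollary is essentially a bookkeeping exercise once Theorem \ref{thmMain} is in hand. The only substantive points to verify are the finite-type nilpotency hypotheses needed for the profinite-completion characterization and for Zabrodsky's bijection, both of which are inherited from the hypotheses on $K$ and $Y$.
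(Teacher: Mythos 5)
Your argument is exactly the paper's (the paper simply declares the corollary immediate from Theorem \ref{thmMain} together with the weak-kernel characterization of $\Ph(X,Y)$ and Zabrodsky's bijection \eqref{PhantomIdentCool}), and you have correctly supplied the finite-type/nilpotency bookkeeping the paper leaves implicit. Your rewritten indexing of the product $\prod_k H^k(\s^n K;\pi_{k+1}(\om^m Y)\otimes\RR)$ is also the intended reading of the (typographically garbled) statement.
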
 

A common way to work with polyGEMs ($\F$-finite spaces) is through a form of induction on the structural (co)fiber sequences defining such spaces. At first glance one might expect this technique to be ill suited for the task at hand, since we have no formal reason to expect the functors $\map_\ast(K,-)$ and $\map_\ast(-,Y^\wedge)$ to produce predictable results when applied to cofiber and fiber sequences, respectively. 
Nonetheless, by way of two duality violating mechanisms, the Zabrodsky lemma and the machinery of resolving classes, we obtain the result.

In Section \ref{secBackground} we give detailed descriptions of the terms in the statement of Theorem \ref{thmMain}. We also recall the statement of the Zabrodsky lemma and give a few details on resolving classes. The proof of Theorem \ref{thmMain} is given in the Section \ref{secMainResult}.

Theorem \ref{thmMain} seems to indicate that polyGEMs behave similarly to Postnikov spaces as domains of phantom maps, and $\F$-finite spaces behave similarly to finite spaces as targets of phantom maps. One might be curious if these similarities persevere if we interchange ``domain'' and ``target''. Explicitly, one might ask if we can replace Postnikov space with polyGEM or finite CW complex with $\F$-finite space in the following result.

\begin{thm}\label{thmNoPhant}
A finite CW complex $X$ is not the domain of essential phantom maps. Dually, a Postnikov space of finite type is not the target of essential phantom maps. 
\end{thm}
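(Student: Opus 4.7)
The plan is to dispatch the two dual statements separately: the first by unwinding the definition, and the second by an obstruction-theoretic reduction to the first.

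For the first statement, if $X$ is a finite CW complex and $f\colon X\to Y$ is a phantom map, I would apply the defining condition of a phantom map with $H=X$ and the identity map $\id_X\colon X\to X$. This exhibits $f=f\circ\id_X$ as a composite of $f$ with a map out of the finite CW complex $X$, which must therefore be nullhomotopic.

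For the second statement, suppose $Y$ is a Postnikov space of finite type, so $\pi_k(Y)=0$ for all $k>N$ for some $N$. Standard obstruction theory then shows that the restriction $[X,Y]\to [X^{(N+1)},Y]$ is a bijection for any CW complex $X$: both the obstructions to extending a map out of $X^{(N+1)}$ along higher skeleta and the obstructions to uniqueness of such extensions lie in cohomology groups with coefficients in $\pi_k(Y)$ for $k>N$, all of which vanish. A phantom map $f\colon X\to Y$ restricts to a phantom map on $X^{(N+1)}$, since every finite subcomplex of the skeleton is a finite subcomplex of $X$. Under the finite-type conventions operative throughout the paper, $X^{(N+1)}$ is itself a finite CW complex, and the first statement forces $f|_{X^{(N+1)}}\simeq \ast$; the bijection then promotes this to $f\simeq \ast$.

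The main obstacle lies in dispensing with the finite-type assumption on $X$. In that generality, one replaces the direct appeal to the first statement by Gray's identification $\Ph(W,Y)\cong \limone_\alpha[\Sigma W_\alpha, Y]$ over a cofinal tower of finite subcomplexes $W_\alpha$ of $W=X^{(N+1)}$, and verifies the Mittag--Leffler condition for this tower of finitely generated nilpotent groups; the bounded Postnikov depth of $Y$ forces the tower $\{[\Sigma W_\alpha,Y]\}$ to stabilize, whence $\Ph(W,Y)=0$ and thus $\Ph(X,Y)=0$.
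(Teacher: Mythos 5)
Your first argument is correct and is the standard one-line observation: with the paper's definition of a phantom map (the composite with any map from a finite complex is null), taking $H=X$ and $\id_X$ forces $f\simeq\ast$. For the second statement, your obstruction-theoretic reduction to the $(N+1)$-skeleton, together with the facts that a phantom map restricts to a phantom map on any subcomplex and that $X^{(N+1)}$ is a finite complex when $X$ has finite type, is a complete proof of the theorem as intended. Be aware that the paper does not actually prove Theorem \ref{thmNoPhant}: in Section \ref{secOtherWay} it is attributed to Gray--McGibbon \cite[Corollary 2.1]{univPhantMap} (the stronger statement that any $X$ with $\s X$ a bouquet of finite complexes is not a domain of essential phantoms) and to McGibbon \cite[Theorem 3.20]{handbook} (a criterion on $\om Y$). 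Your self-contained elementary argument is therefore a legitimately different route to the weaker statement the paper needs.

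Your final paragraph, however, should be deleted rather than repaired; it contains a genuine gap and, worse, aims at a false statement. The gap: the cofinal tower of finite subcomplexes of $W=X^{(N+1)}$ grows by attaching cells of dimension $\leq N+1$, and the vanishing of $\pi_k(Y)$ for $k>N$ only kills obstructions attached to cells of dimension $>N+1$; so the restriction maps $[\s W_\beta,Y]\to[\s W_\alpha,Y]$ need not be surjective, and there is no reason for their images to stabilize --- finitely generated (nilpotent) groups admit strictly decreasing chains of subgroups such as $\ZZ\supset 2\ZZ\supset 4\ZZ\supset\cdots$, so finite generation does not give Mittag--Leffler. The falsity: take $X=M(\ZZ[1/2],m)$, the infinite mapping telescope of degree-$2$ self-maps of $S^m$ ($m\geq 2$), and $Y=K(\ZZ,m+1)$. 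Every map from a finite complex into $X$ factors through a partial telescope, which is homotopy equivalent to $S^m$ and so maps trivially to $Y$; hence \emph{every} map $X\to Y$ is phantom in the paper's sense. Yet $[X,Y]=H^{m+1}(X;\ZZ)\cong\Ext(\ZZ[1/2],\ZZ)\cong\ZZ^\wedge_2/\ZZ\neq 0$, so the finite-type Postnikov space $K(\ZZ,m+1)$ receives uncountably many essential phantom maps from this non-finite-type domain. (Concretely, the relevant tower here is $\ZZ\stackrel{2}{\leftarrow}\ZZ\stackrel{2}{\leftarrow}\cdots$, which is exactly a failure of Mittag--Leffler.) The finite-type hypothesis on the domain is thus essential, and your core argument, which uses it, is the right one.
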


We answer this question negatively in Section \ref{secOtherWay}.  As an application we give a new calculation of the $\F$-cone length of a particular space. 

Through the work of Pavesic \cite{PavesicProducts} we find applications of Corollary \ref{corPhantomIdent} to the calculation of  automorphism groups in the homotopy category. Recall $\Aut(X)$ is the group of self-homotopy equivalences of a space $X$. 
In Section \ref{secAut} we examine $\Aut(X)$ and some related groups for spaces $X$ admitting a homotopy decomposition as a product of an $\F$-finite space with a polyGEM, in some cases yielding (more or less) explicit calculations like the following.

\begin{ex}
Let $n\geq 3$ be an odd integer, write $K=K(\ZZ,n)$ and suppose $W=\bigvee_\alpha S^{n_\alpha}$ is a finite-type $(n+2)$-connected wedge of spheres. 
Then there is a split short exact sequence
\[
0\to\prod H^{n+n_\alpha}(\s^{n_{\alpha}} K;\pi_{k+1}(W)\otimes\RR)\to \Aut(K\times W) \to \Aut(W)\times \ZZ/2\to 0. 
\]
i.e. $\Aut(K\times W)$ is a semi-direct product of $\prod H^{n+n_\alpha}(\s^{n_{\alpha}} K;\pi_{k+1}(W)\otimes\RR)$ with $\Aut(W)\times \ZZ/2$.
\end{ex}

\paragraph{Acknowledgements}

The author was introduced to phantom maps and resolving classes by his thesis advisor, Jeff Strom, who provided invaluable feedback on an earlier draft of this manuscript and identified a critical flaw. Thanks are due to Jerome Scherer for pointing the author to the existing body of literature on polyGEMs.

%

\section{Background}\label{secBackground}

 \subsection{Finiteness Conditions}

We begin by describing an invariant known as cone length \cite{Cornea199495}, 
or strong category \cite{fox},\cite{ganea1967}. We prefer the first term. 
A \textbf{length $n$ cone decomposition} of a space $Y$ is a sequence of cofiber sequences 
\begin{equation}
A_i\to Y_{(i)}\to Y_{(i+1)} \label{eqConeLength}, \hspace{3pc} i=0,\dots, n
\end{equation}
with $Y_{(0)}\simeq \ast$ and $Y_{(n)}\simeq Y$. 
Familiar examples of length $n$ cone decompositions are CW structures for spaces having the homotopy type of $n$-dimensional CW complexes. 
The \textbf{cone length} $\Cl(Y)$ of a space $Y$ is the least $n$ for which $Y$ admits a length $n$-cone decomposition, allowing for the possibility $\Cl(Y)=\infty$. 


In \cite{ASS} Arkowitz, Stanley, and Strom introduce and study a generalization of cone length. For a collection $\A$ of spaces a length $n$ $\A$-cone decomposition is defined by requiring the spaces in \eqref{eqConeLength} belong to the collection $\A$. 
The $\A$-cone length $\Cl_\A(Y)$ of a space $Y$ is then the least $n$ for which $Y$ admits a length $n$ $\A$-cone 
decomposition. We will say a space $X$ is $\A$-\textbf{finite} if $\Cl_\A(X)<\infty$. Of particular interest will be the $\F$-finite 
spaces, where $\F$ denotes the collection of finite type wedges of spheres. Such spaces can be built in finitely many principal 
cofibrations from finite type wedges of spheres. 

\begin{ex} \label{exFfinite}
\begin{enumerate}
\item Every finite CW complex is an $\F$-finite space, since a CW structure is an $\F$-cone decomposition. 

\item The space $\bigvee_{n=1}^\infty S^n$ is an example of an $\F$-finite space which is not a finite CW complex. 

\item  
 For each odd prime $p$ it is well-known that there is an element $\alpha_p\in\pi_{2p}(S^3)$ of order $p$. Let $\alpha:\bigvee S^{2p} \to S^3$ be the map whose restriction to each summand $S^{2p}$ is $\alpha_p$, and let $X$ be the homotopy cofiber of $\alpha$. Then $\Cl_\F(X)=2$ (see Section \ref{secOtherWay} or \cite{mePhantomsToCoH}, where it is shown that $\Cl(X)=2$, and one has the general inequality $\Cl_\F(X)\geq \Cl(X)$). 
\end{enumerate}
\end{ex}

We now turn to describing a rough dual to the collection of $\F$-finite spaces, the collection of polyGEMs. The study of such spaces goes back to Farjoun \cite{farjoun1995cellular}. The notion of polyGEM we describe here is consistent with that of Dwyer and Farjoun \cite{DwyerFarjoun}, though our presentation will follow Iriye and Kishimoto  \cite{IriyeKishimoto} to highlight similarities with the $\F$-finite spaces, and since we feel this language provides a more convenient means for communicating certain details regarding polyGEMs. 

    A generalized Eilenberg-MacLane space (GEM) is a product of Eilenberg-MacLane spaces $K(A_n,n), n\in\NN, A_n$ a finitely generated Abelian group; these are the spaces of Postnikov-Length one. A \textbf{length} $n$ \textbf{Generalized Postnikov Tower (GPT)} for a space $X$ is a sequence of fiber sequences 
\[
X_{(i+1)}\to X_{(i)} \to P_i, \hspace{3pc} i=0,\dots,n
\]
in which each $P_i$ is a GEM, $X_{(0)}\simeq\ast$ and $X_{(n)}\simeq X$. The \textbf{Postnikov length} $\pl(X)$ of a space $X$ is the least $n$ for which $X$ admits a length $n$ GPT. A space $X$ is called an \textbf{$n$-polyGEM} if $\pl(X)\leq n$, and may be simply referred to as a \textbf{polyGEM} when $n$ is of no particular significance. 

\begin{ex} \label{exPolyGEM}
\begin{enumerate}
\item Every Postnikov space is a polyGEM, since a Postnikov tower is a GPT. 

\item The space $\prod_{n=1}^\infty K(\ZZ,n)$ is an example of a polyGEM which is not a Postnikov space.

\item \cite[Proposition 4.8]{IriyeKishimoto}
For each prime $p$ let $\P_p^1:K(\ZZ,3)\to K(\ZZ/p,2p+1)$ be the composite of the mod $p$ reduction and the Steenrod reduced power operation $\P^1:K(\ZZ/p,3)\to K(\ZZ/p,2p+1)$. Let $X$ be the homotopy fiber of the map 
\[
 K(\ZZ,3)\to \prod K(\ZZ/p,2p+1)
\]
whose projection on the $p$th component of the target is $\P_p^1$. Then $\pl(X)=2$. 
\end{enumerate}
\end{ex}

\subsection{Duality Violating Mechanisms} 


Suppose 
\begin{equation}\label{eqArbFib}
F\to E \to B
\end{equation} is a fiber sequence and $K$ is any space. Applying the functor $\map_\ast(K,-)$ to the fiber sequence \eqref{eqArbFib} yields a fiber sequence. On the other hand, applying the functor $\map_\ast(-,K)$ to the fiber sequence \eqref{eqArbFib} need not produce anything reasonable at all. However, the following theorem of Zabrodsky allows us to draw conclusions about this formally terrible sequence in special cases. 

\begin{zlem}
Suppose $F\to E \to B$ is a (homotopy) fiber sequence. If $Y$ is a space with $\map_\ast(F,Y)\sim \ast$, then the map 
\[
\map_\ast(B,Y)\to \map_\ast(E,Y)
\]
is a weak equivalence. 
\end{zlem}

Dually, if $X\to Y \to C$ is a cofiber sequence one obtains a fiber sequence upon application of the functor $\map_\ast(-,K)$, but one does not expect the sequence resulting from an application of the functor $\map_\ast(K,-)$ to be well-behaved. However, for the proof of Theorem \ref{thmMain} we can get enough information from the theory of resolving classes.

A \textbf{resolving class} is a collection $\R$ of spaces which is closed under weak equivalences and (pointed) homotopy colimits (over compactly indexed diagrams). Explicitly, if $A \in \R$ and there is a weak equivalence $A\to B$ or $B\to A$ then $B\in \R$, and if $F:\I\to \Top$ is a diagram over a category $\I$ with compact classifying space and $F(i)\in \R$ for all $i\in \I$, then $\holim_\I F \in \R$. 
A \textbf{strong resolving class} is a resolving class $\R$ which is also closed under extensions by fibrations, i.e. if $F\to E \to B$ is a fiber sequence with $F,B\in \R$ then $E\in \R$. We record here for future reference that the intersection of a collection of (strong) resolving classes is again a (strong) resolving class. 

\begin{ex}\cite{StromFDinResolving} \label{exResolvingClasses}
\begin{enumerate} 
\item The collection $\{Y\mid Y\sim \ast\}$ is a strong resolving class. 

\item Let $K$ be any space; the collection $\{Y\mid \map_\ast(K,Y)\sim \ast\}$ is a strong resolving class. 

\item More generally, if $F$ is a covariant functor that commutes with homotopy limits, and $K$ is any space then $\{Y\mid \map_\ast(K,F(Y))\sim \ast\}$ is a strong resolving class. 
\end{enumerate} 
\end{ex}

We will need the following variation upon the last example in \ref{exResolvingClasses}. 

\begin{prop}\label{propThingsResolving}
Suppose $K$ is simply connected and write 
\[
\R=\{Y\mid Y \mbox{ is nilpotent and } \map_\ast(K,Y^\wedge)\sim\ast\}.
\]
Then $\R$ is a strong resolving class. 
\end{prop}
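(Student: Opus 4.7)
The plan is to verify each of the three closure properties defining a strong resolving class in turn: closure under weak equivalences, under extensions by fibrations, and under compactly-indexed homotopy limits. Closure under weak equivalences is immediate, since nilpotency is a homotopy invariant and a weak equivalence $Y \to Y'$ between nilpotent spaces induces a weak equivalence $Y^\wedge \to Y'^\wedge$ on profinite completions, preserving the condition $\map_\ast(K,Y^\wedge)\sim\ast$.

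For closure under fibration extensions, I would start with a fiber sequence $F \to E \to B$ with $F,B \in \R$ and argue in two steps. First, $E$ is nilpotent by the standard long-exact-sequence argument for nilpotent fiber sequences (nilpotent base and fiber force nilpotency of the total space). Second, since Bousfield--Kan show that profinite completion preserves fiber sequences of nilpotent spaces, $F^\wedge \to E^\wedge \to B^\wedge$ remains a fiber sequence. Applying $\map_\ast(K,-)$, which always preserves fiber sequences, then yields
\[
\map_\ast(K,F^\wedge) \to \map_\ast(K,E^\wedge) \to \map_\ast(K,B^\wedge),
\]
a fiber sequence whose fiber and base are contractible, so the total space is weakly contractible and $E \in \R$.

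The bulk of the work is closure under compactly-indexed homotopy limits. Given $F \colon \I \to \Top$ with $B\I$ compact and $F(i) \in \R$ for all $i$, the aim is to show that $\holim_\I F$ is nilpotent and satisfies the mapping condition. The mapping-space part is largely formal once one knows two commutation facts: $\map_\ast(K,-)$ commutes with all homotopy limits, and (by Bousfield--Kan) profinite completion commutes with compactly-indexed homotopy limits of nilpotent diagrams, giving
\[
\map_\ast\bigl(K,(\holim_\I F)^\wedge\bigr) \simeq \holim_\I \map_\ast\bigl(K, F(-)^\wedge\bigr) \sim \ast.
\]
The step I expect to require the most care is the combination of nilpotency of $\holim_\I F$ and the commutation of profinite completion with this holim in the compactly-indexed setting. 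For finite $\I$ both statements are classical; the compactness of $B\I$ should permit a reduction to the finite case via the tower and $\limone$ machinery of Bousfield--Kan, and that reduction is the technical heart of the proof.
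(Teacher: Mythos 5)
Your overall architecture (checking the three closure properties separately) matches the paper's, but the two key commutation claims you lean on are false as stated, and they are precisely where the content of the proposition lives. The more serious one is the holim step: you assert that profinite completion commutes with compactly-indexed homotopy limits of nilpotent diagrams, so that $\map_\ast\bigl(K,(\holim_\I F)^\wedge\bigr)\simeq\holim_\I\map_\ast\bigl(K,F(-)^\wedge\bigr)$ "largely formally." The paper explicitly warns that completion does \emph{not} commute with homotopy limits in general --- already for towers the comparison map $(\holim X_n)^\wedge\to\holim X_n^\wedge$ fails to be an equivalence because of $\limone$ phenomena, and this failure is essentially why phantom maps exist at all. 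What is actually available is a theorem of Dror Farjoun: for a finite-type diagram over a category with compact classifying space, the comparison map has \emph{discrete} fibers over each component of its target. The paper then forms the fiber sequence
\[
\map_\ast(K,Q)\to\map_\ast\bigl(K,(\holim\N)^\wedge\bigr)\to\map_\ast\bigl(K,\holim\N^\wedge\bigr),
\]
where $Q$ is the fiber of the comparison map, uses Sullivan's splitting $Y^\wedge\simeq\prod_p Y^\wedge_p$ to see that $Q$ is a product of discrete spaces, hence $\map_\ast(K,Q)\sim\ast$ because $K$ is connected, and concludes since the base is connected and weakly contractible by hypothesis. Your proposed "reduction to the finite case via the tower and $\limone$ machinery" would amount to reproving something like Farjoun's theorem; the commutation you want is simply unavailable.

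The fibration step has the same flavor of over-assertion but is repairable. Profinite completion does not preserve fiber sequences of nilpotent spaces on the nose; the correct statement (May--Ponto) is that the \emph{basepoint component} of the homotopy fiber $P$ of $E^\wedge\to B^\wedge$ is a completion of the basepoint component of $F$. The paper therefore works with the genuine fiber sequence $P\to E^\wedge\to B^\wedge$ and uses connectivity of $K$ to get $\map_\ast(K,P)\simeq\map_\ast(K,P_0)\simeq\map_\ast(K,F^\wedge)\sim\ast$, after which your argument goes through. Separately, your claim that nilpotence of fiber and base forces nilpotence of the total space is false (the Klein bottle fibers over $S^1$ with fiber $S^1$), so the "standard long-exact-sequence argument" you invoke does not exist; this point needs a different treatment or an additional hypothesis on the fibration.
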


In the interest of clarity, we remark that profinite completion does not, in general, commute with the formation of homotopy limits. On the other hand, Dror Farjoun has shown that this failure of commutativity is mild if the homotopy limit is indexed by a small enough category:

\begin{thm}\cite{FarjounCompletingLimits}\label{thmDF}
If $\N$ is a diagram of finite type spaces over a category $\I$ with compact classifying space, then the natural comparison map 
\[
(\holim_\I \N)^\wedge_p \to \holim_\I \N^\wedge_p
\]
has discrete fibers over each component of its target. 
\end{thm}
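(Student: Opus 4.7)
The plan is to analyze the comparison map through the Bousfield-Kan cosimplicial model of homotopy limit together with the resulting homotopy spectral sequence, exploiting flatness of $\hat{\ZZ}_p$ over $\ZZ$ when applied to finitely generated data. The essential input is the compactness of $B\I$: it bounds the effective cosimplicial degree, so the Bousfield-Kan spectral sequence is concentrated in a finite horizontal strip $0\leq s\leq N$, ruling out unbounded derived-limit artifacts.

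First I would replace $\holim_\I\N$ by the totalization of its canonical cosimplicial replacement $\prod_{i_0\to\cdots\to i_n}\N(i_n)$ and write down the Bousfield-Kan spectral sequence $E_2^{s,t}=\lim^s_\I \pi_t(\N)\Rightarrow \pi_{t-s}(\holim_\I\N)$ alongside its parallel counterpart $\tilde E_2^{s,t}=\lim^s_\I \pi_t(\N^\wedge_p)\Rightarrow \pi_{t-s}(\holim_\I\N^\wedge_p)$. The finite-type hypothesis on $\N$ makes each $\pi_t(\N(i))$ finitely generated, so $\pi_t(\N(i)^\wedge_p)\cong \pi_t(\N(i))\otimes\hat{\ZZ}_p$ in positive degrees by the standard Bousfield-Kan calculation of completion on finite-type nilpotent spaces. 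Flatness of $\hat{\ZZ}_p$ over $\ZZ$ then gives $\tilde E_2^{s,t}\cong E_2^{s,t}\otimes \hat{\ZZ}_p$ in positive total degree, while the finite cohomological bound on $\I$ controls the differentials and extensions.

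The next step is to combine this with the universal-coefficient description of $\pi_*$ of a $p$-completion (relating $\pi_n(X^\wedge_p)$ to $\Hom(\ZZ/p^\infty,\pi_{n-1}X)$ and $\Ext(\ZZ/p^\infty,\pi_n X)$) applied to $X=\holim_\I \N$; this identifies the positive-degree homotopy groups of $(\holim_\I\N)^\wedge_p$ with those of $\holim_\I\N^\wedge_p$ component by component, since on each component both are recovered as $\hat{\ZZ}_p$-tensorings of the bounded-strip $E_\infty$ of the underlying spectral sequence. The main obstacle I expect is the delicate bookkeeping at $s\geq 1$: one must verify that the discrepancy between $(\lim^s \pi_t\N)\otimes \hat{\ZZ}_p$ and $\lim^s(\pi_t\N\otimes \hat{\ZZ}_p)$, which is real and can be uncountable (coming from $\lim^1$ of towers of finite $p$-groups), is absorbed entirely into $\pi_0$ of the homotopy fiber and never contaminates $\pi_{\geq 1}$. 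Once triviality of positive homotopy on each fiber is established, the discrete-fiber statement follows immediately.
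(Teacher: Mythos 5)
The paper does not prove this statement: it is quoted from Dror Farjoun \cite{FarjounCompletingLimits} and used as a black box in the proof of Proposition \ref{propThingsResolving}, so there is no in-paper argument to compare yours against. Judged on its own terms, your outline points in a sensible direction --- the Bousfield--Kan cosimplicial replacement and its homotopy spectral sequence are the natural tools, and you correctly identify compactness of $B\I$ and finite generation of the $\pi_t(\N(i))$ as the hypotheses doing the work --- but the crux of the theorem is exactly the step you defer.

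Two concrete problems. First, you assert $\tilde{E}_2^{s,t}\cong E_2^{s,t}\otimes\hat{\ZZ}_p$ ``by flatness,'' and later concede that the discrepancy between $(\lim^s\pi_t\N)\otimes\hat{\ZZ}_p$ and $\lim^s(\pi_t\N\otimes\hat{\ZZ}_p)$ is real and can be uncountable, ``coming from $\lim^1$ of towers.'' These cannot both be operative. Flatness alone is useless here, because $\lim^s_\I$ is in general built from infinite products and $(\prod_\alpha M_\alpha)\otimes\hat{\ZZ}_p\to\prod_\alpha(M_\alpha\otimes\hat{\ZZ}_p)$ is not an isomorphism. What saves the argument is that compactness of $B\I$ gives not merely a bound on cosimplicial degree but finitely many nondegenerate simplices in each degree, so $\lim^s_\I$ is computed by a finite cochain complex of \emph{finite} products, and only then does flatness let you pull $\otimes\hat{\ZZ}_p$ through. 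A tower over $\NN$ does not have compact nerve, so your worry about $\lim^1$ of towers indicates you have not pinned down which consequence of compactness you are using; as written the $E_2$ identification is unjustified. Second, even granting it, the claim that the discrepancy is ``absorbed entirely into $\pi_0$ of the fiber'' is the entire content of the theorem, and you label it an expected obstacle rather than proving it. Closing it requires: showing $\holim_\I\N$ is again nilpotent of finite type so that $\pi_n((\holim\N)^\wedge_p)\cong\pi_n(\holim\N)\otimes\hat{\ZZ}_p$ (the theorem assumes only finite type, and for non-nilpotent spaces this description of completion fails); checking that the natural comparison map induces the map of spectral sequences realizing your abstract isomorphism of $E_\infty$-terms (an abstract isomorphism of associated graded pieces does not by itself give a $\pi_*$-isomorphism along the given map); and a separate fringe analysis in $t\leq 1$, where the spectral sequence is non-abelian and where the genuine $\pi_0$-discrepancy of the fiber actually lives. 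Without these steps the proposal is a plan, not a proof.
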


\begin{proof}[Proof of Proposition \ref{propThingsResolving}]
That $\R$ is closed under weak equivalence follows from the observation, due to Quick \cite{Quick}, that profinite completion preserves weak equivalences. 

Suppose $\N$ is a diagram of nilpotent spaces over a category $\I$ with $\N(i)\in\R$ for all $i\in\I$. Write $Q$ for the homotopy fiber of the natural comparison map $(\holim \N)^\wedge\lra\xi \holim (\N^\wedge)$, and consider the fiber sequence 
\[
\map_\ast(K,Q)\to \map_\ast(K,(\holim\N)^\wedge)\lra{\xi^\wedge_\ast} \map_\ast(K,\holim (\N^\wedge))
\]
Note that 
\[
\map_\ast(K,\holim (\N)^\wedge)\simeq \holim \map_\ast(K,\N^\wedge))\sim\ast,
 \]
and, in particular, $\map_\ast(K,\holim (\N^\wedge))$ is connected. 

According to Sullivan \cite{sullivan1974}, since $X$ is nilpotent of finite type we have a natural equivalence $X^\wedge \simeq \prod_p X^\wedge_p$. So, from Theorem \ref{thmDF} we infer $Q$ is a product of discrete spaces. Since $K$ is connected, $\map_\ast(K,Q)\sim \ast$. Since $\map_\ast(K,\holim (\N^\wedge))$ is connected, we infer $\xi^\wedge_\ast$ is a weak equivalence, and $\map_\ast(K,(\holim\N)^\wedge)\sim \ast$. 
Thus $\R$ is closed under the formation of homotopy limits. 

Finally, suppose $X\to Y \lra p Z$ is a fiber sequence with $X,Z\in\R$. Consider the induced fiber sequence 
\begin{equation}\label{diagIndFib}
P\to Y^\wedge\lra{p^\wedge} Z^\wedge. \notag
 \end{equation}
 Applying the functor $\map_\ast(K,-)$ we obtain another fiber sequence 
 \[
 \map_\ast(K,P)\to \map_\ast(K,Y^\wedge)\lra{(p^\wedge)_\ast}\map_\ast(K,Z^\wedge)\sim \ast. 
 \]
 If we can show $\map_\ast(K,P)\sim \ast$, we can conclude $(p^\wedge)_\ast$ is a weak equivalence in light of the observation that $\map_\ast(K,Z^\wedge)$ is connected. It will then follow that $\map_\ast(K,Y^\wedge)\sim \ast$, which is to say $Y\in\R$. 

  To this end, 
write $X_0$ for the basepoint component of $X$. Then since $K$ is connected $\map_\ast(K,X)\simeq\map_\ast(K,X_0)$. According to May and Ponto \cite[Proposition 11.2.5]{MayPonto} the map $X_0\to P_0$ is a profinite completion of $X_0$, i.e. $P_0\simeq (X_0)^\wedge$. So, we have 
\[
\map_\ast(K,P)\simeq \map_\ast(K,P_0)\simeq \map_\ast(K,(X_0)^\wedge)\simeq \map_\ast(K,(X^\wedge)_0)\simeq \map_\ast(K,X^\wedge)\sim \ast
\]
since $X\in\R$. 
\end{proof}

Strom \cite{StromFDinResolving} shows that resolving classes possess a number of formally implausible closure properties. For example, the following theorem shows that a strong resolving class is closed under the formation of particular homotopy \emph{co}limits! Before stating the result, we establish some notation. 
Given collections $\A$ and $\B$ of simply-connected spaces we write
\begin{align*}
\A\wedge \B &= \{ A \wedge B \mid A \in \A, B\in \B\} \\
\s \A &= \{\s A \mid A \in \A \} \\
\A^\vee &= \{ \mbox{all finite type wedges of spaces in } \A\}. 
\end{align*}

\begin{thm}\cite[Corollary 11]{StromFDinResolving}\label{thmResolvingHammer} 
If $\R$ is a strong resolving class with $\s \A^\vee \subseteq \R, \A \wedge \A \subseteq \A$ and $\s \A \subseteq \A$, then $\R$ contains every space $K$ with $\Cl_{\A^\vee}(K)<\infty$. 
\end{thm}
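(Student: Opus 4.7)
I plan to proceed by induction on $n = \Cl_{\A^\vee}(K)$.

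For the base case $n = 0$, $K \simeq \ast$; recognizing the empty wedge as an element of $\A^\vee$ gives $\ast \simeq \s\ast \in \s\A^\vee \subseteq \R$, and closure of $\R$ under weak equivalence finishes the base case.

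For the inductive step, suppose the result holds in cone length less than $n$, and let $K$ have $\Cl_{\A^\vee}(K) = n$. A cone decomposition produces a cofiber sequence $A \to Y \to K$ with $A \in \A^\vee$ and $\Cl_{\A^\vee}(Y) \leq n - 1$. The inductive hypothesis gives $Y \in \R$, while the hypothesis on $\R$ gives $\s A \in \s\A^\vee \subseteq \R$; applying the strong resolving property to the fiber sequence $\Omega\s A \to \ast \to \s A$ also puts $\Omega\s A \in \R$. To pass from this cofibration into fibration data amenable to a strong resolving class, I would form the Puppe map $q\colon K \to \s A$ together with its homotopy fiber, giving a fiber sequence
\[
F \to K \to \s A.
\]
Since $\s A \in \R$, the strong resolving property reduces the problem to verifying $F \in \R$.

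The main obstacle is identifying $F$ well enough to exhibit it in $\R$: the natural map $Y \to F$ from the null composite $Y \to K \to \s A$ is not a weak equivalence in general. My plan is to appeal to Ganea's theorem, which describes $F$ via a sequential construction built from $Y$ by iterated joins and smashes involving $A$ and $\Omega\s A$. The closure conditions $\A \wedge \A \subseteq \A$ and $\s\A \subseteq \A$ ensure that, after one suspension, each iterated smash piece lies in $\s\A^\vee$ and hence in $\R$. The subtle part is assembling these pieces into $F$ using only the closure operations actually available for a strong resolving class --- weak equivalences, compactly-indexed homotopy limits, and fibration extensions --- rather than the sequential homotopy colimits that naturally present $F$. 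This is precisely the place where the \emph{formally implausible} closure strength of resolving classes (in the spirit of Strom's earlier work) must be invoked. Once $F \in \R$ is established, the strong resolving property applied to $F \to K \to \s A$ yields $K \in \R$, completing the induction.
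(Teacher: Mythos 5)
First, a caveat: the paper does not prove this statement at all --- it is imported verbatim from Strom \cite{StromFDinResolving}, so your attempt can only be measured against the argument there and against the closely parallel computation the paper does carry out in Section \ref{secMainResult} to verify $\s\A^\vee\subseteq\R$. Your outer skeleton is the standard one: induct on $\Cl_{\A^\vee}$, trade the cofibration $A\to Y\to K$ for the fiber sequence $F\to K\to \s A$ of the pinch map, and use closure under fibration extensions to reduce to $F\in\R$. But there is a genuine gap exactly where you write that the ``formally implausible closure strength'' of resolving classes ``must be invoked.'' The known identification of the fiber of the pinch map (Gray's, as used in Section \ref{secMainResult} for the collapse map $\s A\vee S^n\to S^n$; not Ganea's fiber--cofiber construction, which computes the fiber of the other map in the sequence) presents $F$ as a homotopy \emph{pushout} of $Y$ with pieces built from half-smashes of $A$ with $\om\s A$, which after the James splitting are finite-type wedges of smash powers of $A$ --- that is, $F$ is again ``$Y$ with a cone attached along an element of $\A^\vee$'' (here is where $\A\wedge\A\subseteq\A$ enters). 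So ``$F\in\R$'' is a problem of exactly the same shape and cone length as ``$K\in\R$'': your reduction decreases no complexity measure, and as written the induction does not close. Invoking the closure strength of $\R$ at this point is invoking the theorem to prove the theorem.

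The missing mechanism is connectivity. Iterating the fiber-of-the-pinch-map construction yields a tower of principal fibrations $\dots\to F_2\to F_1\to F_0=K$ with each base in $\s\A^\vee\subseteq\R$ and with the connectivity of the attached pieces (equivalently, of the maps $Y\to F_i$) tending to infinity, because each iteration smashes with a further copy of $A$; one then identifies the limit of the tower with $Y\in\R$ and extracts $K\in\R$ using only compactly indexed homotopy limits and fibration extensions. This is precisely the structure of the argument for the tower $W_{i+1}\to W_i\to S^{g(i)}$ in Section \ref{secMainResult}, and it is the actual content of Strom's Corollary 11; your write-up omits it entirely. Two smaller points: your justification that $\om\s A\in\R$ runs the extension axiom backwards --- it produces the total space from the fiber and base, never the fiber from the total space and base; if you need $\om\s A\in\R$, realize it as the homotopy pullback of $\ast\to\s A\leftarrow\ast$, a compactly indexed homotopy limit of spaces in $\R$. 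And in fact $\om\s A\in\R$ plays no role in the outline you gave.
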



\section{Proof of Theorem \ref{thmMain}} \label{secMainResult} 

We begin by establishing a preliminary result. 

\begin{prop}\label{propStep1}
If $K$ is a simply-connected polyGEM, and $Y$ is a finite complex, then $\map_\ast(K,Y^\wedge)\sim\ast$. 
\end{prop}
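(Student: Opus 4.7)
The plan is to induct on $\pl(K)$, using the Zabrodsky lemma on the GPT for the inductive step, and combining Proposition \ref{propThingsResolving} with Theorem \ref{thmResolvingHammer} to handle the GEM base case.

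For the inductive step, assume $\pl(K) = n \geq 2$, fix a GPT for $K$, and consider the fiber sequence
\[
\om P_{n-1} \to K \to X_{(n-1)},
\]
in which $X_{(n-1)}$ is an $(n-1)$-polyGEM (simply-connected after passing to basepoint components if necessary) and $\om P_{n-1}$ is a GEM. The induction hypothesis gives $\map_\ast(X_{(n-1)}, Y^\wedge) \sim \ast$, so the Zabrodsky lemma reduces the inductive step to proving $\map_\ast(\om P_{n-1}, Y^\wedge) \sim \ast$ --- i.e., to the base case for a GEM.

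For the GEM base case, fix a simply-connected GEM $G = \prod_{n \geq 2} K(A_n, n)$ with the $A_n$ finitely generated. By Proposition \ref{propThingsResolving}, the class $\R = \{Y \mid Y \text{ nilpotent and } \map_\ast(G, Y^\wedge) \sim \ast\}$ is a strong resolving class. Taking $\A$ to be the collection of spheres of dimension at least $1$, one has $\A \wedge \A \subseteq \A$ and $\s \A \subseteq \A$, and every finite CW complex has finite $\F = \A^\vee$-cone length. Theorem \ref{thmResolvingHammer} then applies, and it suffices to verify $\s \F \subseteq \R$ --- that is, $\map_\ast(G, V^\wedge) \sim \ast$ for every finite-type wedge of spheres $V$ of dimension $\geq 2$.

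For this last reduction, present $G$ as the homotopy limit $\holim G_N$ with $G_N = \prod_{n=2}^N K(A_n, n)$ a simply-connected finite-type Postnikov space, so that Theorem \ref{thmZabrodsky} applies directly to give $\map_\ast(G_N, V^\wedge) \sim \ast$ for each $N$. Passing from the finite Postnikov approximations $G_N$ to the infinite GEM $G$ --- via iterated application of the Zabrodsky lemma along the fiber sequences $G_N^\perp \to G \to G_N$ with the increasingly connected fibers $G_N^\perp = \prod_{n > N} K(A_n, n)$, combined with the homotopy-limit closure of $\R$ from Proposition \ref{propThingsResolving} --- is the main obstacle, since the fibers $G_N^\perp$ are themselves infinite GEMs and the naive tower argument is circular without invoking the strong resolving class machinery, whose role is exactly to bypass this apparent circularity.
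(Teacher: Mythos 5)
Your inductive step is exactly the paper's: loop the GEM off the top of the GPT and apply the Zabrodsky lemma to $\om P_{n-1}\to K\to X_{(n-1)}$, using the induction hypothesis on $X_{(n-1)}$ and the GEM base case on $\om P_{n-1}$. That part is fine.

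The base case, however, has a genuine gap, and you essentially concede it in your last sentence without closing it. Two problems compound. First, routing the base case through Proposition \ref{propThingsResolving} and Theorem \ref{thmResolvingHammer} forces you to verify $\s\A^\vee\subseteq\R$, i.e.\ $\map_\ast(G,(\s V)^\wedge)\sim\ast$ for \emph{all} finite-type wedges of spheres, including infinite ones; Theorem \ref{thmZabrodsky} does not ``apply directly'' there, since its target must be a finite CW complex. You have replaced the statement to be proved (target a finite complex) by a strictly harder one (target an arbitrary finite-type wedge of spheres), which in the paper is the content of Theorem \ref{thmMain} itself and whose proof \emph{uses} Proposition \ref{propStep1}. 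Second, and more fundamentally, none of your proposed mechanisms passes from the finite subproducts $G_N$ to the infinite GEM $G$ in the \emph{source} variable: writing $G\simeq\holim G_N$ is useless because $\map_\ast(-,Y^\wedge)$ converts homotopy colimits to homotopy limits, not the reverse; the Zabrodsky lemma along $G_N^\perp\to G\to G_N$ requires $\map_\ast(G_N^\perp,Y^\wedge)\sim\ast$ for the infinite GEM $G_N^\perp$, which is circular; and the resolving-class closure properties live entirely in the target variable, so they cannot ``bypass'' this. The paper's missing idea is elementary and different: replace the infinite product $\prod_n K(A_n,n)$ by the weak product $\tilde{\prod}_n K(A_n,n)$ (the colimit of the finitely indexed subproducts), which is weakly equivalent to it because in each degree only finitely many factors have nontrivial homotopy; then $\map_\ast(\colim_P P, Y^\wedge)\simeq \lim_P \map_\ast(P,Y^\wedge)\sim\ast$, with each finite subproduct $P$ a finite-type Postnikov space handled by Theorem \ref{thmZabrodsky}. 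With that substitution the base case closes and no resolving-class machinery is needed for this proposition.
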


\begin{proof}
The argument is by induction on the Postnikov length of $K$. For the base case we consider a GEM $K(A)\simeq \prod_n K(A_n,n)$. Recall that for a collection $\{X_\alpha\}$ of spaces, the weak product $\tilde{\prod} X_\alpha$ is the colimit of all finitely indexed subproducts of the categorical product $\prod X_\alpha$. If for each $i$ there are only finitely many $\alpha$ for which $\pi_i(X_\alpha) \neq 0$  the canonical comparison map $\tilde{\prod} X_\alpha \to \prod X_\alpha$ is a weak equivalence. Of course $K(A)$ meets the stated requirement on homotopy groups, and so to show $\map_\ast(K(A),Y^\wedge)\sim \ast$ for $Y$ a finite CW complex, it suffices to show $\map_\ast(\tilde{\prod} K(A_n,n),Y^\wedge)\sim \ast$. 
Of course, if $P$ is any finitely indexed subproduct of $K(A)$, then $P$ is a Postnikov space, and so $\map_\ast(P,Y^\wedge)\sim\ast$ by Theorem \ref{thmZabrodsky}. Finally, since $\map_\ast(\colim X_i,Y)\simeq \lim \map_\ast(X_i,Y)$ we conclude $\map_\ast(\tilde{\prod} K(A_n,n),Y^\wedge)\sim \ast$, which establishes the base case.

For the inductive step, suppose that when $\pl(Z)\leq n$ we have $\map_\ast(Z, Y^\wedge)\sim \ast$ for all finite complexes $Y$, and let $X$ be a space with $\pl(X)=n+1$. Then we have a fiber sequence 
\[
X \to Z \to K 
\]
with $\pl(Z)=n$ and $\pl(K)=1$, which gives rise to a fiber sequence 
\begin{equation}\label{PfFiberSeq}
\om K \to X \to  Z. 
 \end{equation}
 Applying the Zabrodsky lemma to the fibration \eqref{PfFiberSeq} we see that since $\map_\ast( \om K, Y^\wedge)\sim \ast$ the induced map 
\[
\ast\sim \map_\ast(Z,Y^\wedge)\to \map_\ast(X, Y^\wedge)
\]
is a weak equivalence. 
\end{proof} 

To complete the proof of Theorem \ref{thmMain} we turn to the theory of resolving classes. 
Consider the collection 
\[
\R=\{Y\mid \map_\ast(K,Y^\wedge)\sim\ast \mbox{ for all simply-connected polyGEMs } K\}.
\]
Since the intersection of strong resolving classes is a strong resolving class, by Proposition \ref{propThingsResolving} we infer $\R$ is a strong resolving class. 
We now appeal to Theorem \ref{thmResolvingHammer}. Let $\A$ be the collection of spheres, so $\A^\vee =\F$. Then clearly $\s\A \subseteq \A \wedge \A \subseteq \A$. It remains to show that $\s\A^\vee\subseteq \R$. 

This argument is essentially due to Jeff Strom \cite[Theorem 8]{StromFDinResolving}. 
First we define a partial order on $\s\A^\vee$. For $X,Y\in \s\A^\vee$ we say $X<Y$ if $\conn(Y)<\conn(X)$ or if $\conn(X)=\conn(Y)$ and the rank of $\pi_n(Y)$ is less than the rank of $\pi_n(X)$. 

Let $W\in \s \A^\vee$, with $\conn(W)=n-1$. Write  $W=\s A \vee S^n$ and note $\s A <W$. 
 Gray  \cite{GrayOnHMThm} has shown the homotopy fiber $W_1$ of the collapse map $\s A \vee S^n  \to S^n$ is homotopy equivalent to $\s A \wedge \left(\bigvee_k S^{nk}\right)$. It follows that $W_1\in \s \A^\vee$, and, moreover, $W_1<W$. Inductively, this defines a tower of principal fibrations
\[
\dots \to W_n \lra{f_n} \dots \lra{f_3} W_2 \lra{f_2}  W_1\lra{f_1}  W
\]
induced by maps $W_n\to S^{g(n)}$, where $g(n)$ is the dimension of the smallest sphere summand of $W_n$,
in which $W_n < W_{n-1} \in \s\A^\vee$ for all $n$. By the definition of $<$ we have $\lim_{n\to\infty} \conn(W_n)=\infty$ and so $\holim W_n\sim \ast$. 

Now, since $W$ is simply-connected and $W_i<W$ for all $i$ each $W_i$ is simply-connected. It follows that $S^{g(i)}$ is simply-connected for all $i$, being a summand of $W_i\in\s\A^\vee$, so the fiber sequences
\[
W_{i+1}\to W_{i}\to S^{g(i)}
\]
are preserved under completion \cite[Theorem 11.2.6]{MayPonto}.  Let $K$ be a simply-connected polyGEM,
and consider the fiber sequences 
\[
\map_\ast(K,W_{i+1}^\wedge)\to \map_\ast(K,W_i^\wedge)\to \map_\ast(K,(S^{g(i)})^\wedge).
\]
The right-most space is weakly contractible by Proposition \ref{propStep1}, so $\map_\ast(K,f_i^\wedge)$ is a weak equivalence for all $i$. It follows that the composite maps $g_i=f_i\circ \dots\circ f_2\circ f_1:W_i\to W$ induce weak equivalences
\[
\map_\ast(K,W_i^\wedge)\sim \map_\ast(K,W^\wedge)
\]
for all $i$. 
Then we have 
\begin{align}
\map_\ast(K,W^\wedge) & \sim \holim \map_\ast(K,W^\wedge) \\\notag
&\sim \holim \map_\ast(K,W_i^\wedge) \\\notag
& \sim \map_\ast(K, \holim (W_i^\wedge)).
\end{align}
Now, for any space $X$ one has $\conn(X^\wedge)\geq \conn(X)$, so $\lim_{n\to\infty} \conn W_i^\wedge = \infty$ and so $\holim W_n^\wedge \sim \ast$. So, we conclude 
\[
\map_\ast(K,W^\wedge) \sim \map_\ast(K, \holim (W_i^\wedge)) \sim \map_\ast(K, \ast)=\ast
\]
and $W\in\R$. 

Having shown that $\s \A^\vee\subseteq \R$, Theorem \ref{thmResolvingHammer} implies $\R$ contains all $\F$-finite spaces, which completes the proof of Theorem \ref{thmMain}.

\paragraph{More General Notions of PolyGEM} More general notions of polyGEMs appear in the literature. One particularly general formulation appears in the work of Chach\'{o}lski, Farjoun, Flores, and Scherer \cite{MR3416113}: 
A 1-polyGEM is a still a GEM, but an $n$-polyGEM is a space weakly equivalent to a retract of the homotopy fiber of a map $X\to K$ where $X$ is an $(n-1)$-polyGEM and $K$ is a GEM. This class has the benefit of being totally characterized by a relation to a modified Bousfield-Kan completion tower. We decided to present the notion above because it is roughly dual to the notion of $\F$-finite spaces, but our arguments apply in more general case as well,  since if $Z$ is a retract of a polyGEM $K$ as defined in Section \ref{secBackground}, then $\map_\ast(Z,Y)$ is a retract of $\map_\ast(K,Y)$ for all spaces $Y$.

\section{Role Reversal}\label{secOtherWay}

Theorem \ref{thmMain} indicates that from the perspective of phantom map theory polyGEMs behave similarly to Postnikov spaces as domains, and as targets $\F$-finite spaces behave similarly to finite complexes. One might wonder how polyGEMs behave in the target and $\F$-finite spaces in the domain. As a means for motivation we restate Theorem \ref{thmNoPhant} from the introduction. 

\begin{thm}\label{thmRep}
A finite CW complex $X$ is not the domain of essential phantom maps. Dually, a Postnikov space $Y$ of finite type is not the target of essential phantom maps. 
\end{thm}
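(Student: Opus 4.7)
The first assertion is essentially tautological: if $X$ is a finite CW complex and $f: X \to Y$ is a phantom map, then applying the defining condition to the identity map $\id_X: X \to X$ (itself a map from a finite CW complex into $X$) forces $f = f\circ \id_X$ to be nullhomotopic. Hence the content of the theorem lies entirely in the dual assertion, which I sketch now.

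My plan is to identify $\Ph(X,Y)$ via the Milnor $\lim^1$ short exact sequence
\[
0 \to \lim\nolimits^1 [\Sigma X_n, Y] \to [X, Y] \to \lim\nolimits_n [X_n, Y] \to 0,
\]
where $\{X_n\}$ is the skeletal filtration of a CW approximation of $X$; under this identification $\Ph(X, Y)$ is precisely the kernel $\lim\nolimits^1 [\Sigma X_n, Y]$. It therefore suffices to show this $\lim^1$ vanishes whenever $Y$ is a finite-type Postnikov space. I will do this by establishing the stronger property that the inverse system $\{[\Sigma X_n, Y]\}_n$ is \emph{eventually constant}, whence $\lim^1 = 0$ follows by the classical Mittag--Leffler vanishing theorem.

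For the eventual constancy, pick $N$ with $\pi_i(Y) = 0$ for all $i > N$ (such an $N$ exists because $Y$ is a Postnikov space). The cellular cofibrations $X_{n-1} \hookrightarrow X_n$ have cofibers $\bigvee S^n$, and applying $[-, Y]$ to the associated Puppe sequences produces a four-term exact sequence
\[
\prod \pi_{n+1}(Y) \to [\Sigma X_n, Y] \to [\Sigma X_{n-1}, Y] \to \prod \pi_n(Y).
\]
For $n > N$ both flanking terms vanish, so the bonding map is a bijection and the tower is constant past degree $N$.

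The one technical point that requires care is that the entries of the tower are a priori only pointed sets, while the standard $\lim^1$ vanishing is cleanest for inverse systems of groups. However, the suspension co-$H$-structure on each $\Sigma X_n$ endows $[\Sigma X_n, Y]$ with the structure of a (possibly nonabelian) group with respect to which the bonding maps are homomorphisms, so the Mittag--Leffler conclusion applies and yields $\Ph(X,Y) = \ast$. The finite-type hypothesis on $Y$ plays no essential role in this vanishing argument; it is presumably in the statement to keep $Y$ within the nilpotent, finite-type setting in which phantom map theory is customarily framed.
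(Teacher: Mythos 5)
Your handling of the first assertion is exactly right, and with the paper's definition of a phantom map there is nothing more to say: take $H=X$ and $g=\id_X$. Note also that the paper does not actually prove Theorem \ref{thmRep}: it quotes it as known, citing \cite{univPhantMap} and \cite{handbook} for the stronger statements about domains with $\s X$ a bouquet of finite complexes and targets with $\om Y$ a product of Postnikov spaces. So there is no in-paper argument to compare against. Your route for the dual assertion --- the Milnor sequence, the four-term exact sequence coming from the Puppe sequence of $X_{n-1}\hookrightarrow X_n$, eventual constancy of the tower above the top nonvanishing homotopy group of $Y$, and the non-abelian Mittag--Leffler vanishing --- is the standard one, and each of those steps is sound (in particular you are right that the bonding maps are homomorphisms because everything in sight is a suspension).

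The genuine gap is the sentence ``under this identification $\Ph(X,Y)$ is precisely the kernel $\lim\nolimits^1[\s X_n,Y]$,'' compounded by your closing claim that the finiteness hypotheses play no essential role. The paper defines a phantom map by testing against maps from \emph{finite} complexes, while the Milnor sequence over the skeletal filtration computes the set of maps that die on every \emph{skeleton}. These coincide only when the skeleta are themselves finite, i.e.\ when $X$ has finite type; for a domain with infinite skeleta your argument proves the vanishing of the wrong (smaller) set, and in fact the theorem itself fails. Concretely, let $X$ be the mapping telescope of the degree-$2$ self-maps of $S^1$, a two-dimensional complex with $H_1(X)\cong\ZZ[1/2]$. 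Every finite subcomplex deformation retracts onto a circle, so every map $X\to\cp^\infty=K(\ZZ,2)$ is phantom in the paper's sense; yet $[X,\cp^\infty]=H^2(X;\ZZ)\cong\Ext(\ZZ[1/2],\ZZ)\cong\ZZ^\wedge_2/\ZZ\neq 0$, so the finite-type Postnikov space $\cp^\infty$ is the target of uncountably many essential phantom maps from this domain. Your proof is correct once you impose the standing convention, in force throughout the paper and in \cite{handbook}, that domains of phantom maps are CW complexes of finite type --- but that hypothesis must be stated and used, not dismissed. (You are right, on the other hand, that finite type of $Y$ is not needed for your argument; only the vanishing of $\pi_i(Y)$ for $i$ large enters.)
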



According to Gray and McGibbon \cite[Corollary 2.1]{univPhantMap} if $\s X$ is equivalent to a bouquet of finite complexes, then $X$ is not the domain of essential phantom maps, and according to McGibbon \cite[Theorem 3.20(iv)]{handbook} if $\om Y$ is a product of Postnikov spaces then $Y$ is not the target of essential phantom maps. So, Theorem \ref{thmRep} remains true if we replace $X$ with a space of $\F$-cone length one or $Y$ with a space of Postnikov length one. However, the following examples show that Theorem \ref{thmNoPhant} cannot be extended to general $\F$-finite spaces, nor to polyGEMs. 

\begin{ex}\cite[Example 3.13]{handbook}
For each prime $p$ let $\alpha_p:S^{2p}\to S^3$ represent an element of $\pi_{2p} S^3$ of order $p$. Let $\alpha:\bigvee_{p\geq 3} S^{2p}\to S^3$, where the wedge is taken over all odd primes, be the map whose restriction to each summand $S^{2p}$ is $\alpha_p$. Let $X$ be the homotopy cofiber of $\alpha$. Evidently $\Cl_\F(X)\leq 2$, but there are essential phantom maps $X\to S^4$. 
\end{ex}

\begin{rem}
In particular this shows $\Cl_\F(X)=2$ since if $\Cl_\F(X)=1$ then $X\in\F$ and $X$ is a wedge of finite complexes.
\end{rem}

\begin{ex}\cite[Proposition 4.8]{IriyeKishimoto}
For each prime $p$ let $\P_p^1:K(\ZZ,3)\to K(\ZZ/p,2p+1)$ be the composite of the mod $p$ reduction and the Steenrod reduced power operation $\P^1:K(\ZZ/p,3)\to K(\ZZ/p,2p+1)$. Let $Y$ be the homotopy fiber of the map 
\[
 K(\ZZ,3)\to \prod K(\ZZ/p,2p+1)
\]
whose projection on the $p$th component of the target is $\P_p^1$. Then $\pl(Y)=2$ and 
\[
\Ph(K(\ZZ/2),Y)\cong \ZZ^\wedge/\ZZ\oplus\bigoplus \ZZ/p^\infty. 
\]
\end{ex}

\section{Automorphisms} \label{secAut}

Here we demonstrate some applications of Theorem \ref{thmMain} and Corollary \ref{corPhantomIdent} to the study of the group $\Aut(X)$ of homotopy classes of (based) self-homotopy equivalences of a space $X$. We will be interested in the case where $X$ splits, up to homotopy, as a product of a polyGEM with an $\F$-finite space.

Write $i_X,i_Y$ for the inclusions of $X$ and $Y$ into $X\times Y$ and write $p_X,p_Y$ for the projections of $X\times Y$ onto $X$ and $Y$, respectively. For a map $f:X\times Y \to X\times Y$ and $I,J\in\{X,Y\}$ let $f_I:X\times Y\to I$ be the composite $f_I=p_I\circ f$ and let $f_{IJ}$ be the composite $I\to X\times Y \lra f X\times Y \to J$. Define a subset $\Aut_X(X\times Y)\subseteq \Aut(X\times Y)$ to be the set containing those maps $f:X\times Y \to X\times Y$ that fix $X$, which is to say maps having the form $(p_X,f_Y)$. The subset $\Aut_Y(X\times Y)$ is defined analogously. It happens \cite[Proposition 2.3]{PavesicProducts} that $\Aut_X(X\times Y)$ and $\Aut_Y(X\times Y)$ are subgroups of $\Aut(X\times Y)$. 

We say the self-equivalences of $X\times Y$ can be \textbf{diagonalized} if for every $f\in\Aut(X\times Y)$ one has $f_{XX}\in \Aut(X)$ and $f_{YY}\in\Aut(Y)$. For subgroups $A,B$ of a group $G$ let $A\cdot B=\{a\cdot b\mid a\in A, b\in B\}$. The key reason one might be interested in the diagonalizability of self-homotopy equivalences is the following. 

\begin{thm}\cite[Theorem 2.5]{PavesicProducts}\label{pav1}
If $X$ and $Y$ are connected CW-complexes and the self-equivalences of $X\times Y$ are diagonalizable, then 
\[
\Aut(X\times Y)= \Aut_X(X\times Y)\cdot \Aut_Y(X\times Y)
\]
\end{thm}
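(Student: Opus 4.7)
The plan is to take an arbitrary $f \in \Aut(X\times Y)$ and manufacture an explicit factorization $f = a \circ b$ with $a\in \Aut_X(X\times Y)$ and $b\in \Aut_Y(X\times Y)$; the reverse containment in the claimed set equality is automatic since both $\Aut_X(X\times Y)$ and $\Aut_Y(X\times Y)$ are subgroups of $\Aut(X\times Y)$. The natural candidate for the ``fixes $Y$'' factor is the upper--triangular map
\[
b := (f_X, p_Y) : X\times Y \longrightarrow X\times Y,
\]
which by construction satisfies $p_Y\circ b = p_Y$. Assuming $b$ turns out to be a self-equivalence, I would then set $a := f\circ b^{-1}$; the identity
\[
p_X\circ a \;=\; p_X\circ f\circ b^{-1} \;=\; f_X\circ b^{-1} \;=\; (p_X\circ b)\circ b^{-1} \;=\; p_X
\]
witnesses $a\in \Aut_X(X\times Y)$, and $a$ is itself a self-equivalence as a composite of such.

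The entire weight of the argument thus rests on showing that $b$ is a self-equivalence. Since $X\times Y$ is a CW complex, I would invoke the Whitehead theorem and check that $b_*\colon \pi_n(X\times Y) \to \pi_n(X\times Y)$ is an isomorphism for each $n\geq 1$. Relative to the natural splitting $\pi_n(X\times Y)\cong \pi_n X \oplus \pi_n Y$ (a product of groups when $n=1$), the induced map $b_*$ is represented by the upper-triangular ``matrix''
\[
b_* \;=\; \begin{pmatrix} (f_{XX})_* & (f_{YX})_* \\ 0 & \id_{\pi_n Y} \end{pmatrix},
\]
the zero entry reflecting that $p_Y\circ b\circ i_X$ is nullhomotopic. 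The diagonalizability hypothesis is exactly what is needed: it forces $(f_{XX})_*$ to be invertible, and hence $b_*$ to be invertible.

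The main anticipated obstacle is the subtlety at $n=1$, where $\pi_1(X\times Y)=\pi_1 X \times \pi_1 Y$ is a (possibly non-abelian) direct product of groups and the matrix shorthand must be interpreted componentwise. Nevertheless, upper-triangularity still yields an explicit two-step inversion on the underlying sets---solve for the $Y$-coordinate first, then apply $(f_{XX})_*^{-1}$ to recover the $X$-coordinate---and a bijective group homomorphism is automatically an isomorphism, so this case poses no genuine obstruction. Once $b$ is known to be a self-equivalence, the relation $f = a\circ b$ exhibits $f$ as an element of $\Aut_X(X\times Y) \cdot \Aut_Y(X\times Y)$, completing the nontrivial inclusion.
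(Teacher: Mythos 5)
Theorem \ref{pav1} is quoted from Pavesic \cite{PavesicProducts} and the paper supplies no proof of its own, so there is nothing internal to compare against; your argument is correct and is essentially the standard one for this result: the triangular factor $b=(f_X,p_Y)$ induces an upper-triangular map on each $\pi_n(X\times Y)\cong \pi_n(X)\times\pi_n(Y)$ with diagonal entries $(f_{XX})_*$ and $\id$, diagonalizability makes $(f_{XX})_*$ invertible, Whitehead's theorem upgrades $b$ to a self-equivalence in $\Aut_Y(X\times Y)$, and $f=(f\circ b^{-1})\circ b$ gives the factorization. Your treatment of the non-abelian $n=1$ case via the identity $b_*(\alpha,\beta)=\bigl((f_{XX})_*\alpha\cdot (f_{YX})_*\beta,\ \beta\bigr)$ and explicit two-step inversion is also sound.
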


\begin{prop}\cite[Proposition 2.1]{PavesicProducts}\label{pav2}
Assume that for every $n>0$ and every pair of maps $X\lra f Y, Y\lra g X$ at least one of the induced maps $\pi_n(f),\pi_n(g)$ is trivial. Then self-homotopy equivalences of $X\times Y$ can be diagonalized. 
\end{prop}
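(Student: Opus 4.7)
The plan is to reduce diagonalizability of a given $f\in\Aut(X\times Y)$ to showing that the restricted self-maps $f_{XX}\colon X\to X$ and $f_{YY}\colon Y\to Y$ induce isomorphisms on every homotopy group; Whitehead's theorem then concludes. The mechanism is a matrix description of $\pi_n(f)$ which the hypothesis forces to be triangular at each level.

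The first step is to record that, under the canonical splitting $\pi_n(X\times Y)\cong \pi_n(X)\oplus\pi_n(Y)$ (a direct product of groups when $n=1$, but the matrix bookkeeping is the same), the induced map $\pi_n(f)$ takes the form
\[
\pi_n(f)=\begin{pmatrix}\pi_n(f_{XX}) & \pi_n(f_{YX}) \\ \pi_n(f_{XY}) & \pi_n(f_{YY})\end{pmatrix}.
\]
Next I would apply the standing hypothesis to the pair of maps $f_{XY}\colon X\to Y$ and $f_{YX}\colon Y\to X$: for each $n>0$ at least one of $\pi_n(f_{XY})$ or $\pi_n(f_{YX})$ vanishes, so the displayed matrix is triangular at every level (upper at some $n$, lower at others, with no uniform choice demanded).

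Because $f$ is a self-equivalence, $\pi_n(f)$ is an isomorphism. A triangular invertible endomorphism of a direct sum must have invertible diagonal entries, so $\pi_n(f_{XX})$ and $\pi_n(f_{YY})$ are isomorphisms for every $n>0$. Invoking the standing CW hypothesis (as used in Theorem \ref{pav1}), Whitehead's theorem yields $f_{XX}\in\Aut(X)$ and $f_{YY}\in\Aut(Y)$, which is exactly diagonalizability.

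The main, really only, subtlety is that the hypothesis does not globally single out a particular off-diagonal entry as zero: the vanishing entry is allowed to depend on $n$. This is harmless because the triangular-plus-invertible-implies-diagonal-invertible step is carried out independently at each level $n$; one does not need a uniform triangulation across all homotopy groups. Everything else in the argument is standard bookkeeping.
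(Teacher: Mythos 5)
Your setup is right as far as it goes: the matrix form of $\pi_n(f)$ with respect to $\pi_n(X\times Y)\cong\pi_n(X)\oplus\pi_n(Y)$, and the observation that the hypothesis makes this matrix triangular at each level $n$, are both correct. The gap is the concluding algebraic step: it is \emph{not} true that a triangular automorphism of a direct sum has invertible diagonal entries. Take $A=B=\bigoplus_{i\geq 1}\ZZ$ and define $\phi(a,b)=(\alpha(a),\gamma(a)+\delta(b))$, where $\alpha$ is the down-shift ($e_1\mapsto 0$, $e_i\mapsto e_{i-1}$ for $i\geq 2$), $\delta$ is the up-shift ($e_i\mapsto e_{i+1}$), and $\gamma$ sends $e_1\mapsto e_1$ and kills $e_i$ for $i\geq 2$. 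One checks directly that $\phi$ is a lower-triangular automorphism of $A\oplus B$, yet $\alpha$ is not injective and $\delta$ is not surjective. (What fails is that the inverse of a triangular automorphism need not be triangular: here $\phi^{-1}(0,e_1)=(e_1,0)$.) One can only get "surjective implies injective" from Hopfian-type hypotheses, and the proposition imposes no finite-generation condition on homotopy groups, so the step cannot be rescued as written.

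The repair uses the part of the hypothesis you never invoked: it quantifies over \emph{every} pair of maps $X\to Y$, $Y\to X$, not just the single pair $(f_{XY},f_{YX})$. Let $g$ be a homotopy inverse of $f$. If $\pi_n(f_{XY})\neq 0$, then applying the hypothesis to the pairs $(f_{XY},f_{YX})$ and $(f_{XY},g_{YX})$ forces $\pi_n(f_{YX})=0$ and $\pi_n(g_{YX})=0$; symmetrically if some component $Y\to X$ is nonzero on $\pi_n$. Hence at each $n$ the matrices of $\pi_n(f)$ and $\pi_n(g)$ are triangular \emph{with the zero in the same position}, so the relations $\pi_n(g)\pi_n(f)=\mathrm{id}$ and $\pi_n(f)\pi_n(g)=\mathrm{id}$ read off on the diagonal as $\pi_n(g_{XX})\pi_n(f_{XX})=\mathrm{id}=\pi_n(f_{XX})\pi_n(g_{XX})$, and likewise for the $YY$ entries. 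This exhibits two-sided inverses, so $\pi_n(f_{XX})$ and $\pi_n(f_{YY})$ are isomorphisms for all $n$, and your appeal to Whitehead's theorem for CW complexes then finishes the argument.
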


By Theorem \ref{thmMain} self-homotopy equivalences of $X\times Y$ can be diagonalized whenever $X$ is a polyGEM and $Y$ is $\F$-finite, since a phantom map induces the zero map on homotopy groups.


Using Theorem \ref{thmMain} and Corollary \ref{corPhantomIdent} we are able to make some (more or less) explicit calculations. 
Before turning to these calculations, we collect a few results; the first is a classical result due to Booth and Heath, and the second is a more modern observation due to Pavesic. 

\begin{thm}\cite{BoothHeath}\label{BH}
If $X$ and $Y$ are connected CW complexes and if $[Y,\aut_1(X)]=\ast$ then there is a split short exact sequence 
\[
0\to [X,\aut_1(Y)]\to \Aut(X\times Y)\to \Aut(X)\times \Aut(Y)\to 0. 
\]
\end{thm}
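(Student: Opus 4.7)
The plan is to construct both arrows of the proposed sequence explicitly and verify exactness, using the exponential adjunction $\map(X \times Y, Z) \simeq \map(Y, \map(X, Z))$ to convert the hypothesis into a structural statement about self-maps of $X \times Y$. The splitting $\sigma: \Aut(X) \times \Aut(Y) \to \Aut(X \times Y)$ is given by $(g,h) \mapsto g \times h$, and the retraction is $\rho: f \mapsto (f_{XX}, f_{YY})$. Both are group homomorphisms (modulo a claim below), and the identity $\rho \circ \sigma = \id$ is immediate from $(g \times h)_{XX} = g$ and $(g \times h)_{YY} = h$, giving both the splitting and the surjectivity of $\rho$.

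The key structural claim is that for every $f \in \Aut(X \times Y)$ the $X$-component $f_X := p_X \circ f$ factors, up to homotopy, as $f_{XX} \circ p_X$. I would argue this by considering the adjoint $\tilde{f}_X : Y \to \map(X, X)$, which satisfies $\tilde{f}_X(\ast) = f_{XX}$. Since $Y$ is connected, $\tilde{f}_X$ lands in the path component of $\map(X,X)$ containing $f_{XX}$. Provided $f_{XX}$ is a self-equivalence, this component is pointed homotopy equivalent to $\aut_1(X)$ via right-translation by $f_{XX}^{-1}$, and the hypothesis $[Y, \aut_1(X)] = \ast$ then forces $\tilde{f}_X$ to be null-homotopic, yielding the desired factorization.

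The apparent circularity, requiring $f_{XX}$ to be a self-equivalence before invoking the hypothesis, is the main obstacle. It can be resolved by observing that once the factorization $f_X \simeq f_{XX} \circ p_X$ holds, the induced map $\pi_n(f)$ on $\pi_n(X\times Y) \cong \pi_n(X) \oplus \pi_n(Y)$ is block lower-triangular with $\pi_n(f_{XX})$ on the upper-left diagonal; since $\pi_n(f)$ is an isomorphism, so is $\pi_n(f_{XX})$, and Whitehead's theorem confirms $f_{XX} \in \Aut(X)$ retrospectively. From the factorization, the composition law $(f \circ h)_{XX} \simeq f_{XX} \circ h_{XX}$ is immediate, so $\rho$ is a group homomorphism; the component $f_{YY}$ behaves symmetrically.

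Finally, to compute $\ker(\rho)$: if $f_{XX} = \id_X$ and $f_{YY} = \id_Y$, then $f_X \simeq p_X$ by the above, so $f$ is homotopic to $(p_X, f_Y)$ with $f_Y \circ i_Y = \id_Y$. The adjoint $\tilde{f}_Y : X \to \map(Y, Y)$ is then a based map into the identity component $\aut_1(Y)$, and the assignment $f \mapsto [\tilde{f}_Y]$ defines a bijection onto $[X, \aut_1(Y)]$ with inverse sending $\phi$ to the self-equivalence $f_\phi(x,y) = (x, \phi(x)(y))$. Under pointwise composition in $\aut_1(Y)$ this bijection is a group isomorphism, completing the identification of the kernel and the proof.
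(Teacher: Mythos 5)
The paper does not actually prove this statement---it is quoted from Booth and Heath---so your argument can only be measured against the literature. Your overall strategy (adjoint the components of a self-map of the product, use the hypothesis to factor $f_X$ through $p_X$, and identify the kernel of $f\mapsto(f_{XX},f_{YY})$ with $[X,\aut_1(Y)]$) is the standard one, and the kernel identification and the splitting are essentially right.

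The gap is the circularity you yourself flag, and your proposed resolution does not break it. You need $f_{XX}$ to be a homotopy equivalence \emph{before} the hypothesis $[Y,\aut_1(X)]=\ast$ says anything: the adjoint $\tilde{f}_X\colon Y\to\map(X,X)$ lands in the path component of $f_{XX}$, and only when that component consists of equivalences is it identified with $\aut_1(X)$; if $f_{XX}$ were, say, in the component of a constant map, the hypothesis is silent about maps into that component. Your resolution begins ``once the factorization $f_X\simeq f_{XX}\circ p_X$ holds,'' but that factorization is exactly what cannot be obtained without first knowing $f_{XX}\in\Aut(X)$---the argument assumes its conclusion. (Even granting the factorization, a block lower-triangular isomorphism of abelian groups only forces the upper-left block to be split surjective, not invertible; the clean route is to apply the factorization to a homotopy inverse $g$ as well and read off $g_{XX}\circ f_{XX}\simeq\id_X\simeq f_{XX}\circ g_{XX}$, which again presupposes the factorization for $g$.) The missing input is precisely what Section \ref{secAut} calls diagonalizability of the self-equivalences of $X\times Y$: that $f_{XX}\in\Aut(X)$ and $f_{YY}\in\Aut(Y)$ for every $f\in\Aut(X\times Y)$. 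This is a genuinely separate step---the paper quotes Proposition \ref{pav2} exactly because it does not come for free---and it must either be deduced from the stated hypothesis by a non-circular argument, as Booth and Heath do in their analysis of reducibility of the product, or imposed as an explicit assumption. Without it, even the map $\Aut(X\times Y)\to\Aut(X)\times\Aut(Y)$ is not well defined.
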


\begin{prop}\cite[Proposition 4.2]{PavesicProducts} \label{propCoHSeq}
If $Y$ is a co-H-space then there is an exact sequence of groups and pointed sets 
\begin{align*}
\dots \to [\s^2 X, Y]\to[\s X \wedge Y, Y] &\to [\s X,\aut_1(Y)]\to [\s X, Y]  \to [X\wedge Y,Y]\\ & \to [X,\aut_1(Y)]
 \to [X,Y] \to [X,B\aut_1^\ast(Y)]. 
\end{align*}
\end{prop}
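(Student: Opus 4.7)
To prove this proposition, the plan is to start with the evaluation fibration of pointed spaces
\[
\aut_1^\ast(Y) \to \aut_1(Y) \lra{\mathrm{ev}} Y,
\]
where $\aut_1^\ast(Y)$ is the component of $\mathrm{id}_Y$ in $\map_\ast(Y,Y)$ and $\mathrm{ev}$ is evaluation at the basepoint; the first two spaces are pointed at $\mathrm{id}_Y$ and the third at $y_0$. Applying $\map_\ast(X,-)$ preserves fibration sequences, and the resulting long exact sequence of homotopy groups (using $\pi_n\map_\ast(X,-) = [\s^n X,-]$) reads
\[
\dots \to [\s^2 X,Y] \to [\s X,\aut_1^\ast(Y)] \to [\s X,\aut_1(Y)] \to [\s X,Y] \to [X,\aut_1^\ast(Y)] \to [X,\aut_1(Y)] \to [X,Y] \to [X,B\aut_1^\ast(Y)],
\]
with the final term appearing because $\aut_1^\ast(Y)$ is grouplike (as a component of the topological monoid $\map_\ast(Y,Y)$ under composition), so the Puppe extension can be pushed one step to the right.

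The key second step is to use the co-H-space structure of $Y$ to identify $[\s^n X,\aut_1^\ast(Y)] \cong [\s^n X \wedge Y,Y]$ for all $n \geq 0$. The comultiplication $\mu\colon Y \to Y \vee Y$ makes $\map_\ast(Y,Y)$ into an H-space via $f\cdot g = (f \vee g) \circ \mu$, with unit the constant map $\mathrm{const}_\ast$. Since $[Y,Y] = \pi_0\map_\ast(Y,Y)$ is consequently a group, the class of $\mathrm{id}_Y$ is invertible and left translation $L_{\mathrm{id}_Y}$ is a self-homotopy equivalence of $\map_\ast(Y,Y)$ carrying the component of $\mathrm{const}_\ast$ onto the component of $\mathrm{id}_Y$; because $L_{\mathrm{id}_Y}(\mathrm{const}_\ast) = \mathrm{id}_Y$, it restricts to a pointed weak equivalence $(\map_\ast(Y,Y))_{\mathrm{const}_\ast} \simeq \aut_1^\ast(Y)$. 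Combined with the exponential law, under which a pointed map $Z \to \map_\ast(Y,Y)$ automatically lands in the $\mathrm{const}_\ast$-component and corresponds to a map $Z \wedge Y \to Y$, this yields $[Z,\aut_1^\ast(Y)] \cong [Z \wedge Y,Y]$ for every $Z$. Substituting $Z = \s^n X$ into the long exact sequence above produces exactly the sequence claimed in the proposition.

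The main obstacle is justifying that $L_{\mathrm{id}_Y}$ is a weak equivalence, which reduces to the general fact that in any H-space $H$ with $\pi_0 H$ a group, left translation by any element is a homotopy equivalence — the inverse being left translation by a representative of the inverse class, shown to be homotopy-inverse using associativity of the H-space product up to homotopy. Beyond this, one should confirm naturality, i.e.\ that the transported connecting map $[X,\aut_1^\ast(Y)] \to [X,\aut_1(Y)]$ corresponds under the translation identification to the natural map sending $g\colon X \wedge Y \to Y$ to the $\aut_1(Y)$-valued map $x \mapsto (y \mapsto g(x \wedge y) \cdot_\mu y)$. These naturality checks follow by unwinding the translation, so the crux of the argument is the co-H hypothesis together with the H-space-translation trick on $\map_\ast(Y,Y)$.
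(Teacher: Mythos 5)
This proposition is quoted from Pave\v{s}i\'c and the paper gives no proof of it, so there is no internal argument to compare yours against; I can only assess your reconstruction on its own terms. Your architecture is the standard (and surely the intended) one: the evaluation fibration $\aut_1^\ast(Y)\to\aut_1(Y)\lra{\mathrm{ev}}Y$, its classifying extension $Y\to B\aut_1^\ast(Y)$ (which, note, requires the classical Dold--Lashof/Gottlieb fact that this fibration is principal, i.e.\ classified by a map $Y\to B\aut_1^\ast(Y)$, not merely that the fibre is grouplike), the long exact sequence obtained by applying $\map_\ast(X,-)$, and the identification $[Z,\aut_1^\ast(Y)]\cong[Z\wedge Y,Y]$ coming from the co-H structure. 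Two smaller points: the induced product on $\map_\ast(Y,Y)$ is $f\cdot g=\nabla\circ(f\vee g)\circ\mu$ (the fold map is missing from your formula), and the fibre of $\mathrm{ev}$ over the basepoint is the space of pointed self-maps \emph{freely} homotopic to $\mathrm{id}_Y$, of which $\aut_1^\ast(Y)$ is a priori only the basepoint component; one should say why this discrepancy does not disturb exactness.

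The genuine gap is in your justification of the central equivalence $\map_\ast(Y,Y)_{\mathrm{const}}\simeq\aut_1^\ast(Y)$. You reduce it to the claim that in an H-space with $\pi_0$ a group, translations are homotopy equivalences, ``shown to be homotopy-inverse using associativity of the H-space product up to homotopy.'' But the multiplication on $\map_\ast(Y,Y)$ induced by the comultiplication $\mu$ is homotopy associative only when $\mu$ is homotopy \emph{coassociative}, and a co-H-space need not be coassociative; for the same reason $[Y,Y]=\pi_0\map_\ast(Y,Y)$ with this operation is in general only an algebraic loop, not a group, so ``a representative of the inverse class'' is not available in the sense you use it. As written, your argument proves the proposition when $Y$ is a cogroup (for instance a suspension), which does cover every application made in this paper, where $Y=W$ is a wedge of spheres, but it does not prove the stated result for an arbitrary co-H-space. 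To close the gap one must either invoke the known result that all path components of $\map_\ast(Y,Z)$ are weakly equivalent when $Y$ is a co-H-space (with a naturality check that the resulting bijections carry the connecting maps of the fibration sequence to the maps displayed in the proposition), or run a translation argument adapted to algebraic loops rather than to groups.
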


%
%
%
%

\begin{ex}\label{exA}
Let $n\geq 3$ be an odd integer, write $K=K(\ZZ,n)$ and suppose $W=\bigvee_\alpha S^{n_\alpha}$ is a finite-type wedge of spheres with $\conn(W)\geq n+2$.
Then there is a split short exact sequence
\[
0\to\prod H^{n+n_\alpha}(\s^{n_{\alpha}} K;\pi_{k+1}(W)\otimes\RR)\to \Aut(K\times W) \to \Aut(W)\times \ZZ/2\to 0. 
\]
i.e. $\Aut(K\times W)$ is a semi-direct product of $\prod H^{n+n_\alpha}(\s^{n_{\alpha}} K;\pi_{k+1}(W)\otimes\RR)$ with $\Aut(W)\times \ZZ/2$.
\end{ex}

\begin{proof}
According to May \cite{MaySimplicialObjects} we have $\aut_1(K)\simeq K$ so $[W,\aut_1(K)]\cong\bigoplus_\alpha \pi_{n_\alpha} K =0$, so Theorem \ref{BH} can be applied in this context. We also have $\Aut(K)\cong \Aut(\ZZ)\cong \ZZ/2$, and so we have a split short exact sequence 
\[
0\to[K,\aut_1(W)]\to \Aut(K\times W)\to \Aut(W)\times \ZZ/2\to 0. 
\]
Now, by Proposition \ref{propCoHSeq} there is an exact sequence
\[
[\s K,W]\to [W\wedge K,W]\to [K,\aut_1(W)]\to [K,W]
\]
which, in light of Theorem \ref{thmMain} is the same as the sequence
\[
 \Ph(\s K,W)\to \Ph(W\wedge K,W) \to [K,\aut_1(W)]\to \Ph(K,W) .
\]
But by Corollary \ref{corPhantomIdent} we have $\Ph(K,W)\cong H^n(K;\pi_{n+1}(W)\otimes\RR)=0$ and $\Ph(\s K,W)\cong H^{n+1}(\s K;\pi_{n+2}(W)\otimes\RR)=0$, since the rational cohomology of $K$ is concentrated in degree $n$ and $W$ is $(n+2)$-connected. It follows that $[K,\aut_1(W)]\cong\Ph(W\wedge K,W)$. Now $W\wedge K\simeq \bigvee \s^{n_\alpha}K$, so again by Corollary \ref{corPhantomIdent} we infer 
\[
\Ph(W\wedge K,W)\cong \prod H^{n+n_\alpha}(\s^{n_\alpha} K; \pi_{n+n_\alpha+1}(W)\otimes\RR)
\]
which establishes Example \ref{exA}
\end{proof}

%

There is also particular interest in the subgroups $\Aut_{\sharp N}(X)$ of the group $\Aut(X)$ consisting of those self-equivalences inducing the identity map on homotopy groups $\pi_n(X)$ for $n\leq N$. There are subgroups $\Aut_{X,\sharp N}(X\times Y), \Aut_{Y,\sharp N}(X\times Y)$ of $\Aut_{\sharp N}(X)$ and analogs of Theorem \ref{pav1} and Proposition \ref{pav2} that can be used to study the group  $\Aut_{\sharp N}(X)$. There is also a generalization of an analog of Theorem \ref{BH}: 

\begin{prop}\cite[Proposition 2.9.d)]{PavesicProducts}\label{propIdentSubgrps}
If $X$ is connected then there is a split short exact sequence 
\[
0\to K_N(X,Y)\to \Aut_{X,\sharp N}(X\times Y)\to \Aut_{\sharp N}(Y)\to 0
\]
where $K_N(X,Y)$ is the subgroup of $[X,\aut_1(Y)]$ consisting of classes $[f]$ such that the composition 
\[
X\lra f \aut_1(Y)\lra{\mathrm{ev}} Y 
\]
induces the zero map on $\pi_n$ for all $n\leq N$. 
\end{prop}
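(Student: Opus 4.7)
The plan is to define a restriction homomorphism $\Phi:\Aut_{X,\sharp N}(X\times Y)\to \Aut_{\sharp N}(Y)$ by $\Phi(f)=f_{YY}$ and a splitting $\sigma:\Aut_{\sharp N}(Y)\to \Aut_{X,\sharp N}(X\times Y)$ by $\sigma(h)=\id_X\times h$. The identity $\Phi\circ\sigma=\id$ is transparent, so the bulk of the work is to show that $\Phi$ is a well-defined, surjective group homomorphism whose kernel is $K_N(X,Y)$.

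First I would verify that $\Phi$ lands in $\Aut_{\sharp N}(Y)$. Using the splitting $\pi_n(X\times Y)=\pi_n(X)\oplus\pi_n(Y)$, the assumption that $f$ fixes $X$ makes $\pi_n(f)$ upper triangular with diagonal entries $\id_{\pi_n(X)}$ and $\pi_n(f_{YY})$; that $f$ is an equivalence forces $\pi_n(f_{YY})$ to be an automorphism for every $n$, and the $\sharp N$ hypothesis forces it to be the identity for $n\leq N$ while simultaneously killing the off-diagonal entry $\pi_n(f_{YX})$ for $n\leq N$. That $\Phi$ is a homomorphism reduces to the identity $(f\circ g)_{YY}=f_{YY}\circ g_{YY}$, which in turn uses only that both $f$ and $g$ fix $X$. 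Surjectivity of $\Phi$ and the splitting property follow at once from the definition of $\sigma$.

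The crux of the argument is the identification of the kernel. An element $f\in\ker\Phi$ is determined by its $Y$-component $f_Y:X\times Y\to Y$, which via the exponential adjunction corresponds to a based map $\tilde f:X\to Y^Y$ sending the basepoint to $\id_Y$; the hypothesis that $f$ is a self-equivalence fixing $X$ forces $\tilde f$ to factor through $\aut_1(Y)\subseteq Y^Y$. The same adjunction unwinding yields $f_{YX}=\mathrm{ev}\circ\tilde f$, so the residual $\sharp N$ condition $\pi_n(f_{YX})=0$ for $n\leq N$ is exactly the definition of $K_N(X,Y)$. The main obstacle here is a bit of bookkeeping: one must check that composition in $\Aut_X(X\times Y)$ corresponds under this adjunction to the group operation on $[X,\aut_1(Y)]$ coming from pointwise composition in $\aut_1(Y)$, so that the set-theoretic bijection with $K_N(X,Y)$ is an isomorphism of groups. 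Once that compatibility is in place the split short exact sequence follows.
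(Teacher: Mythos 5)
The paper does not prove this statement; it is quoted verbatim from Pave\v{s}i\'c \cite[Proposition 2.9.d)]{PavesicProducts}, so there is no in-paper argument to compare against. Your proposal is the standard proof of this kind of result and is correct in outline: the triangular form of $\pi_n(f)$ for $f\in\Aut_X(X\times Y)$, the identity $(f\circ g)_{YY}=f_{YY}\circ g_{YY}$, the section $h\mapsto \id_X\times h$, and the exponential adjunction identifying $\ker\Phi$ with a subgroup of $[X,\aut_1(Y)]$ under pointwise composition are exactly the right ingredients, and connectedness of $X$ enters precisely where you use it (to force the adjoint to land in the identity component $\aut_1(Y)$).

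Two small points. First, a notational slip relative to the paper's conventions: the off-diagonal entry killed by the $\sharp N$ condition, and the composite $\mathrm{ev}\circ\tilde f$, is $f_{XY}$ (the component $X\to X\times Y\xrightarrow{f}X\times Y\to Y$), not $f_{YX}$, which is constant for any $f$ fixing $X$. Second, the ``bookkeeping'' you defer is where the only real delicacy lives: an element of $\ker\Phi$ is a homotopy class of equivalences $f$ with $f_{YY}$ merely \emph{homotopic} to $\id_Y$, so to obtain a \emph{based} map $X\to(\aut_1(Y),\id_Y)$ one must normalize $f$ within its homotopy class (dragging $f_{YY}$ to $\id_Y$ along a path in $\aut_1(Y)$) and check that the resulting class in $[X,\aut_1(Y)]$ is independent of the choices; one should also record the converse direction, that every class in $K_N(X,Y)$ yields a genuine self-equivalence, which follows from your triangular-matrix observation plus the Whitehead theorem. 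Neither issue is a gap in the strategy, but a complete write-up needs both.
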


In particular, if $X$ is a polyGEM and $Y$ is $\F$-finite, then $K_N(X,Y)=[X,\aut_1(Y)]$ for all $N$. 

\begin{ex}
With $K$ and $W$ as in Example \ref{exA} and $N\geq n$ there is a split short exact sequence
\[
0\to \prod H^{n+n_\alpha}(\s^{n_\alpha} K; \pi_{n+n_\alpha+1}(W)\otimes\RR)\to \Aut_{\sharp N}(K\times W) \to \Aut_{\sharp N}(W)\to 0. 
\]
\end{ex}

This is established in much the same way as Example \ref{exA}, noting that $\Aut_{\sharp N}(K)=0$ for $N\geq n$.

\bibliographystyle{plain}

\bibliography{DissertationBiblio}{}
%

%

\end{document}